\newdimen\rh@wd
\newdimen\rh@hta
\newdimen\rh@htb
\newbox\rh@box
\def\rh@measure#1{\setbox\rh@box=\hbox{$#1$}\rh@wd=\wd\rh@box \rh@hta=\ht\rh@box}
\def\widecheck#1{\rh@measure{#1}%
  \setbox\rh@box=\hbox{$\widehat{\vrule height \rh@hta width\z@ \kern\rh@wd}$}%
  \rh@htb=\ht\rh@box \advance\rh@htb\rh@hta \advance\rh@htb\p@
  \ooalign{$\vrule height \ht\rh@box width\z@ #1$\cr
           \raise\rh@htb\hbox{\scalebox{1}[-0.95]{\box\rh@box}}\cr}}
\newcommand{\oneop}{\mathds{1}}
\newcommand{\rbar}{{\overline{r}}}
\newcommand{\iotabar}{{\overline{\iota}}}
\newcommand{\vtilde}{{\tilde{v}}}
\newcommand{\Ehat}{{\widehat{E}}}
\newcommand{\Echeck}{{\widecheck{E}}}
\newcommand{\Ebar}{{\overline{E}}}
\newcommand{\slot}{{\,\cdot \,}}
\newcommand{\C}{\mathcal{C}}
\newcommand{\cF}{\mathcal{F}}
\newcommand{\E}{\mathcal{E}}
\newcommand{\D}{\mathcal{D}}
\newcommand{\A}{\mathcal{A}}
\newcommand{\cL}{\mathcal{L}}
\renewcommand{\L}{\cL}
\newcommand{\M}{\mathcal{M}}
\newcommand{\N}{\mathcal{N}}
\renewcommand{\L}{\cL}
\newcommand{\RR}{\mathbb{R}}
\newcommand{\CC}{\mathbb{C}}
\newcommand{\NN}{\mathbb{N}}
\DeclareMathOperator{\End}{End}
\DeclareMathOperator{\vNMor}{vNMor}
\DeclareMathOperator{\Ind}{Ind}
\DeclareMathOperator{\Ad}{Ad}
\DeclareMathOperator{\id}{id}
\def\II{{I\!I}}
\newcommand{\lqq}{\lq\lq}
\DeclareRobustCommand{\eg}{e.g.\@\xspace}
\DeclareRobustCommand{\cf}{cf.\@\xspace}
\DeclareRobustCommand{\ie}{i.e.\@\xspace}
\DeclareRobustCommand{\p}{p.\@\xspace}
\DeclareRobustCommand{\Sec}{Sec.\@\xspace}
\DeclareRobustCommand{\Prop}{Prop.\@\xspace}
\DeclareRobustCommand{\Lem}{Lem.\@\xspace}
\DeclareRobustCommand{\Cor}{Cor.\@\xspace}
\DeclareRobustCommand{\Thm}{Thm.\@\xspace}
\DeclareRobustCommand{\Ch}{Ch.\@\xspace}
\DeclareRobustCommand{\Ex}{Ex.\@\xspace}
\DeclareRobustCommand{\Def}{Def.\@\xspace}
\DeclareRobustCommand{\Rmk}{Rmk.\@\xspace}
\DeclareRobustCommand{\eq}{eq.\@\xspace}
\DeclareRobustCommand{\etc}{%
    \@ifnextchar{.}%
        {etc}%
        {etc.\@\xspace}%
}
\newcommand{\Cstar}{$C^\ast$\@\xspace}
\newcommand{\Wstar}{$W^\ast$\@\xspace}
\def\u1net{{\A_\RR}}
\theoremstyle{plain}
\newtheorem{thm}{Theorem}[section]
\newtheorem{cor}[thm]{Corollary}
\newtheorem{lem}[thm]{Lemma}
\newtheorem{prop}[thm]{Proposition}
\theoremstyle{definition}
\newtheorem{defi}[thm]{Definition}
\theoremstyle{remark}
\newtheorem{rmk}[thm]{Remark}
\numberwithin{equation}{section}
\newcommand{\colN}{gray!20}
\newcommand{\colM}{gray!55}
\tikzstyle{shaded}=[fill=red!10!blue!20!gray!30!white]
\tikzstyle{unshaded}=[fill=white]
\newcommand{\tikzmath}[2][]
     {\vcenter{\hbox{
     \begin{tikzpicture}[#1]#2
     \end{tikzpicture}}}
     }
\begin{document}

\title{\LARGE A planar algebraic description of conditional expectations}

\author{\Large Luca Giorgetti}

\affil{\normalsize Dipartimento di Matematica, Universit\`a di Roma Tor Vergata\\

Via della Ricerca Scientifica, 1, I-00133 Roma, Italy\\

{\tt giorgett@mat.uniroma2.it}}

\date{}

\maketitle

\begin{abstract}
Let $\N\subset\M$ be a unital inclusion of arbitrary von Neumann algebras. We give a 2-\Cstar-categorical/planar algebraic description of normal faithful conditional expectations $E:\M\to\N\subset\M$ with finite index and their duals $E':\N'\to\M'\subset\N'$ by means of the solutions of the conjugate equations for the inclusion morphism $\iota:\N\to\M$ and its conjugate morphism $\iotabar:\M\to\N$. In particular, the theory of index for conditional expectations admits a 2-\Cstar-categorical formulation in full generality. Moreover, we show that a pair $(\N\subset\M, E)$ as above can be described by a Q-system, and vice versa.
These results are due to Longo in the subfactor/simple tensor unit case \cite[\Thm 5.2]{Lon90}, \cite[\Thm 5.1]{Lon94}.
\end{abstract}

\tableofcontents

\vfill

{\footnotesize Supported by the European Union's Horizon 2020 research and innovation programme H2020-MSCA-IF-2017 under Grant Agreement 795151 \emph{Beyond Rationality in Algebraic CFT: mathematical structures and models} and by the MIUR Excellence Department Project awarded to the Department of Mathematics of the University of Rome Tor Vergata, CUP E83C18000100006.}

\newpage

\section{Introduction}

In \cite{Jon83}, Jones introduced the notion of index for subfactors, a number that measures the \lqq relative size" of a factor with respect to another factor.
Recall that a factor is a von Neumann algebra with trivial center, typically infinite-dimensional as a complex algebra, and that a subfactor consists of two factors, one included in the other with the same unit. 
In the simplest case of subfactors coming from finite group actions, \eg by taking crossed product extensions or fixed point subalgebras, the index coincides with the cardinality of the group. If one considers also intermediate subfactors, \ie group-subgroup subfactors, the index equals the group-theoretical one. The striking result proven in \cite{Jon83} is that the index of a subfactor is bound to the set $\{4 \cos^2(\pi/k), k=3,4,5,\ldots\} \cup [4,\infty]$, hence \lqq quantized" between $1$ and $4$, and that every value in the set is realized.

Initially, the theory of index has been developed for $\II_1$ subfactors $\N\subset\M$, \ie assuming $\M$ to be endowed with a normal faithful tracial state. See \cite{JoSuBook}, \cite{EvKaBook} for the background. In this case, the index $[\M:\N]$ of $\N$ in $\M$ can be defined as $\dim_\N(L^2\M)$ using von Neumann's notion of dimension of the standard representation $L^2\M$ seen as an $\N$-module. The index admits also a variational characterization \cite{PiPo86} as follows. Let $\tau$ be the trace on $\M$ and denote by $E^\tau$ the unique trace-preserving ($\tau = \tau \circ E^\tau$) conditional expectation of $\M$ onto $\N$. By \cite{PiPo86}, the number $\lambda = [\M:\N]^{-1}$ is the best possible $\lambda\geq 0$ such that $E^\tau(x) \geq \lambda x$ for every positive $x\in\M$. Subsequently, in \cite{Kos86} and \cite{Lon89}, the notion of index has been extended to arbitrary subfactors $\N\subset\M$ (not necessarily tracial) endowed with a normal faithful conditional expectation $E$. This index is a number which depends on $\N\subset\M$ and $E$ and it is again quantized between $1$ and $4$. It is characterized as the inverse of the best constant $\lambda\geq 0$ such that $E(x) \geq \lambda x$ for every positive $x\in\M$, provided that $\N$ and $\M$ are not finite-dimensional factors (full matrix algebras). Denoted this number by $\Ind(E)$, it holds that $\Ind(E^\tau) = [\M:\N]$ if $\M$ is tracial and $E^\tau$ is the trace-preserving conditional expectation as before, thus recovering Jones' original definition for $\II_1$ subfactors. 

In the absence of a trace and assuming that the set $\E(\M,\N)$ of normal faithful conditional expectations of $\M$ onto $\N$ is non-empty, one can look for expectations that minimize the index function. This was first done in \cite{Hia88}, \cite{Lon89}, \cite{Hav90}.
In the case of factors, there is a unique such expectation, called minimal expectation and denoted by $E^0$. The number $\Ind(E^0) = [\M:\N]_0$ is called the minimal index of the subfactor. If $\M$ is tracial, hence $[\M:\N]$ can be computed, and if $\N\subset\M$ is irreducible, namely $\N'\cap\M = \CC\oneop$, there is a unique element in $\E(\M,\N)$, hence $E^\tau = E^0$. In particular, $[\M:\N] = [\M:\N]_0$. This latter condition is equivalent to the notion of extremality for $\II_1$ subfactors \cite{PiPo86}, \cite{PiPo91}, and it is implied \eg if the subfactor has finite depth \cite{Pop90}.

There are generalizations of index for inclusions of von Neumann algebras with non-trivial centers, both in the tracial and in the non-tracial case. In the case of finite-dimensional centers, the index becomes a matrix (with finite size, and entry-wise finite if and only if the inclusion has finite index) which is either defined using the (unique Markov) trace \cite{GHJ89}, \cite{Jol90}, \cite{BCEGP20}, or using the (unique) minimal expectation \cite{Ter92}, \cite{Hav90}, \cite{BDH14}, \cite{GiLo19}, \cite{Gio19}. In both cases, one can consider canonical notions of scalar-valued Jones/minimal index as well. For inclusions of algebras with arbitrary (atomic or diffuse) centers, the tracial index has been studied \eg in \cite{Jol90}, \cite{Jol91} and the minimal index in \cite{FiIs96}. In \cite{FiIs96}, explicit examples are given where minimal expectations, which always exist for arbitrary inclusions, are no longer unique if the centers are infinite-dimensional. Further and very beautiful developments on the theory of index (on the line of Kosaki's generalization \cite{Kos86} but for conditional expectations on inclusions of arbitrary von Neumann algebras) are due to \cite{BDH88} and \cite{PopBook}. We also mention Watatani's definition of index for conditional expectations between \Cstar-algebras \cite{Wat90}, which is close in spirit to \cite{BDH88} in the strongly finite index case.

From a different perspective, in \cite{Lon89}, \cite{Lon90}, Longo discovered a connection between the minimal index for subfactors and the statistical dimension of certain superselection sectors (describing \lqq localized" charged particle states) in the Algebraic formulation of Quantum Field Theory \cite{DHR71}, \cite{DHR74}. 
See \cite{HaagBook} for an introduction. The statistical dimension, almost by its very definition, can be made categorical, \cf \cite[\Sec III]{DHR74}.
Namely, it makes sense for an abstract tensor \Cstar-category (with simple tensor unit), not necessarily the tensor \Cstar-category of localized superselection sectors in AQFT. This was first achieved in \cite{DoRo89}, and then more generally in \cite{LoRo97}, and it gives a tensor \Cstar-categorical description of the minimal index for subfactors and of the statistical dimension, thus also called intrinsic/tensor \Cstar-categorical dimension. 
A generalization of this theory of dimension for multitensor \Cstar-categories and 2-\Cstar-categories with finitely decomposable tensor units appears in \cite{GiLo19}.

Following yet another different path, the theory of Jones index for $\II_1$ subfactors (assuming hyperfinite and finite depth) was soon shown to admit a categorical formulation as well. In \cite{Pop90}, Popa proved that finite index and finite depth (or more generally amenable \cite{Pop94}) hyperfinite $\II_1$ subfactors can be classified by \lqq simpler" combinatorial data: a collection of four finite-dimensional algebras fulfilling the commuting square condition. The information contained in the commuting square can be axiomatized in various equivalent ways: Ocneanu's paragroups \cite{Ocn88}, Popa's $\lambda$-lattices \cite{Pop95}, Jones' planar algebras \cite{Jon99}, or more categorically using Connes' bimodules \cite{Bis97} as the standard invariant of the subfactor. The standard invariant of $\N\subset\M$ is the rigid multitensor \Cstar-category generated by $L^2\M$ seen as an $\N\oplus\M$ bimodule, or equivalently the unitary 2-shaded planar algebra associated with $\N\subset\M$. See \cite{MPS10}, \cite{Gho11}, \cite{BHP12} and \cite{BCEGP20}.
The index of the subfactor is then encoded by the loop parameter of the planar algebra, a scalar in this case.

In this work, we provide a 2-\Cstar-categorical/planar algebraic description of the Jones--Kosaki theory of index for conditional expectations, not necessarily minimal or trace-preserving, between arbitrary properly infinite von Neumann algebras, not necessarily with trivial or finite-dimensional centers (Theorem \ref{thm:EEbarandconjeqns}). As a consequence, we show that an arbitrary unital inclusion of properly infinite von Neumann algebras $\N\subset\M$ (equipped with a given finite index conditional expectation $E\in\E(\M,\N)$) is described by a Q-system, a notion introduced by Longo in the context of infinite subfactors with finite index \cite{Lon94}. Vice versa, every Q-system in $\End(\N)$ defines an extension $\M$ with a given finite index conditional expectation $E$ (Theorem \ref{thm:Qsys}). By a Q-system (Definition \ref{def:Qsys}), we mean in this paper a unitary Frobenius algebra object (Definition \ref{def:uFralgebraobject}) in $\End(\N)$ (the monoidal \Cstar-category of the endomorphisms of $\N$) as in \cite{Lon94}, see also \cite{LoRo97}, with an additional invertibility condition on the (co)unit. The invertibility condition is always fulfilled if $\N$ is a factor, \ie if $\End(\N)$ has simple tensor unit. Hence Definition \ref{def:Qsys} boils down to the original definition of Q-system in that case. We refer the reader to Remark \ref{rmk:invertcond} for references to earlier appearances of this invertibility condition in the literature. 
To our knowledge, variations or special cases of our main results: Theorem \ref{thm:EEbarandconjeqns} and \ref{thm:Qsys}, both originally due to Longo in the subfactor case, appear in \cite{Lon90}, \cite{Lon94}, \cite{FiIs95}, \cite{Mue03-I}, \cite{BDH14}, \cite{BKLR15}, \cite{GiYu20}, \cite{CHPJP21}.

The paper is organized as follows. In Section \ref{sec:conjin2Cstarcats}, we recall the definition of 2-\Cstar/\Wstar-category and the notion of conjugate 1-morphisms (also called adjoint or dual 1-morphisms) in that context, which is given by means of the solutions of the conjugate equations \cite{LoRo97} (also called zig-zag or snake or adjoint or duality equations).

In Section \ref{sec:condexp}, we review Kosaki's definition of index for a conditional expectation \cite{Kos86}, which is in general not just a positive scalar or infinite. Let $\N\subset\M$ be a unital inclusion of arbitrary von Neumann algebras. Given $E\in\E(\M,\N)$, the index $\Ind(E)$ is an element of the extended positive part (as defined by Haagerup \cite{Haa79I}) of $Z(\M)$, the center of $\M$. If $\Ind(E)\in Z(\M)$, in which case $\Ind(E)\geq \oneop$ in the sense of operators, we say that $E$ has finite index, and infinite otherwise.
In the case of subfactors, \ie $Z(\N) = Z(\M) = \CC\oneop$, then $\Ind(E) = \lambda \oneop$ for some scalar $\lambda\in\RR$, $\lambda \geq 1$, or infinite. 

In Section \ref{sec:iterateddualexp}, we study the dual expectation $E'\in\E(\N',\M')$ of a finite index conditional expectation $E\in\E(\M,\N)$, obtained by normalizing the Kosaki--Haagerup dual operator-valued weight of $E$.
We compute the index of $E'$, namely we show that $\Ind(E') = E(\Ind(E))\in Z(\M)$ (Lemma \ref{lem:Eprimeinverse}), and we observe that the bidual expectation $E''\in\E(\M,\N)$ does not always coincide with $E$. It is the case that $E=E''$ if and only if $E(\Ind(E)) = \Ind(E)$ (Proposition \ref{prop:bidual}), namely if and only if $\Ind(E) \in Z(\M)\cap Z(\N)$. This condition is always satisfied \eg if $\M$ is a factor, or if $E$ has scalar index. We provide a formula for the index of iterated dual expectations beyond $E''$ (Proposition \ref{prop:interateddualexp}), whose index sits either in $Z(\M)$ or in $Z(\N)$ depending on the parity of the iteration, and we speculate on the convergence of the sequence of indices.

In Section \ref{sec:dualexp}, we recall the definition of Longo's canonical endomorphism \cite{Lon87} and its relation to the Jones tower/tunnel associated with $\N\subset\M$. We transport the dual expectation $E'$ one step up and one step down in the tower/tunnel.

In Section \ref{sec:expareconjeqnssols}, we provide a representation formula for finite index conditional expectations $E\in\E(\M,\N)$, together with their dual expectation $E'\in\E(\N',\M')$, or better their conjugate expectation $\Ebar\in\E(\N,\iotabar(\M))$, by means of the solutions of the conjugate equations for the inclusion morphisms $\iota:\N\to\M$ and its conjugate $\iotabar:\M\to\N$ (Theorem \ref{thm:EEbarandconjeqns}). In particular, this says that $\iota$ and $\iotabar$ are conjugate 1-morphisms in the 2-\Cstar-categorical sense of Section 2 if and only if there exists an expectation $E$ with finite index, \ie if the inclusion $\N\subset\M$ has finite index (Definition \ref{def:finindex}). Moreover, $\Ind(E)\in Z(\M)$ can be expressed using the solutions associated with $E$ as a double loop diagram (Corollary \ref{cor:Indexsolconjeqns}),
and one recovers the standard solutions \cite{LoRo97}, \cite{GiLo19} of the conjugate equations by looking at the minimal expectation $E=E^0$ (Corollary \ref{cor:stdsol}).
The roles played by $E$ and $\Ebar$ are not symmetric in general (Proposition \ref{prop:Ebarbar}), while the roles played by $\iota$ and $\iotabar$ are, namely $\overline{\iotabar} \cong \iota$.

In Section \ref{sec:Qsys}, we call a Q-system a unitary (or \Cstar) Frobenius algebra in $\End(\N)$ with an additional invertibility condition (Definition \ref{def:Qsys}). This condition is always satisfied \eg if $\N$ is a factor, \ie if $\End(\N)$ has simple tensor unit. We show that every unital inclusion of von Neumann algebras $\N\subset\M$ with a given finite index conditional expectation $E\in\E(\M,\N)$ defines a Q-system. Vice versa, every Q-system in $\End(\N)$ defines such a pair $(\N\subset\M,E)$ (Theorem \ref{thm:Qsys}).

\section{Conjugation in 2-\Cstar-categories}\label{sec:conjin2Cstarcats}

A (strict) 2-category $\C$ \cite[\Ch XII]{Mac98}, \cite{JhYaBook} is called a \textbf{2-\Cstar-category} \cite{LoRo97} if every local category $\C(X,Y)$ is a \Cstar-category for every objects $X,Y\in\C$, and if the horizontal composition of 2-morphisms $t\in\C(X,Y)(\alpha,\beta)$, $s\in\C(Y,Z)(\gamma,\delta)$ denoted by
\footnote{Thinking of $\otimes$ as the composition of maps $\alpha:X\to Y$, $\beta:Y\to Z$, $\beta\otimes\alpha = \beta \circ \alpha: X \to Z$ or diagrammatically on the plane with horizontal arrows pointing from right to left.} 
$s\otimes t\in\C(X,Z)(\gamma\otimes\alpha,\delta\otimes\beta)$ is bilinear and preserves the involution, \ie $(s\otimes t)^* = s^* \otimes t^*$ holds in $\C(X,Z)(\delta\otimes\beta,\gamma\otimes\alpha)$ for every 1-morphisms $\alpha,\beta\in\C(X,Y)$, $\gamma,\delta\in\C(Y,Z)$. 

Recall from \cite{GLR85} that a \Cstar-category is a category $\D$ such that the morphisms form complex Banach spaces with respect to a norm $\|\cdot\|$, the composition of morphisms $t\in\D(\alpha,\beta)$, $r\in\D(\beta,\gamma)$ denoted by $r t \in \D(\alpha,\gamma)$ is bilinear and fulfills $\|rt\| \leq \|r\|\|t\|$, and there is an antilinear map $t\in\D(\alpha,\beta) \mapsto t^*\in\D(\beta,\alpha)$ such that $t^{**} = t$, $(rt)^* = t^*r^*$, $t^*t \geq 0$ in $\D(\alpha,\alpha)$ ($t^*t = s^*s$ for some $s\in\D(\alpha,\alpha)$), $1_\alpha^* = 1_\alpha$ and such that the \Cstar identity holds: $\|t^*t\| = \|t\|^2$.

We shall not need in the sequel the notion of direct sum of objects or 1-morphisms in a \Cstar-category or 2-\Cstar-category, hence we do not include it in the definition.
We also write $\alpha:X\to Y$ for 1-morphisms $\alpha\in\C(X,Y)$, and $t:\alpha\Rightarrow\beta:X\to Y$, or just $t:\alpha\Rightarrow\beta$, for 2-morphisms $t\in\C(X,Y)(\alpha,\beta)$ in a 2-category $\C$. Denote by $\id_X:X\to X$ the identity 1-morphism on $X$ and by $1_\alpha:\alpha\Rightarrow\alpha$ the identity 2-morphism on $\alpha$.

\begin{rmk}
A \Cstar-category with one object is a unital \Cstar-algebra. A 2-\Cstar-category with one object is a (strict) monoidal \Cstar-category (sometimes also called tensor under additional assumptions: rigidity and simplicity of the tensor unit) \cite{LoRo97}, \cite{DoRo89}, \cite{EGNO15}.
\end{rmk} 

A pair of 1-morphisms in a 2-\Cstar-category $\C$, denoted by $\iota:X\to Y$ and $\iotabar:Y \to X$ for the sake of uniformity with the following sections, are called \textbf{conjugate} (sometimes also called dual or adjoint) if there is a pair of 2-morphisms denoted by $r: \id_X \Rightarrow \iotabar\otimes\iota : X\to X$ and $\rbar: \id_Y \Rightarrow \iota\otimes\iotabar : Y\to Y$ solving the \textbf{conjugate equations}:
 \begin{align}\label{eq:catconjeqns}
(\rbar^* \otimes 1_\iota) (1_\iota \otimes r) = 1_\iota , \quad (r^* \otimes 1_\iotabar) (1_\iotabar \otimes \rbar) = 1_\iotabar.
\end{align}

We introduce now the 2-\Cstar-category of von Neumann algebras and morphisms. Denote by $\N$, $\M$, $\L$ von Neumann algebras (always assumed with separable predual) and by $\oneop_\N$ the unit of $\N$, or by $\oneop$ when there is no ambiguity. Denote by $\iota: \N \to \M$ a normal injective unital *-homomorphism, below called morphism for short. Denote by $t:\iota_1 \Rightarrow \iota_2$ an intertwiner between two morphisms $\iota_1,\iota_2:\N\to\M$, \ie an operator $t\in\M$ such that $t \iota_1(n) = \iota_2(n) t$ for every $n\in\N$. 
The collection of all von Neumann algebras as objects, morphisms as 1-morphisms, and intertwiners as 2-morphisms forms a 2-\Cstar-category denoted by $\vNMor$. The horizontal composition of morphisms $\alpha:\N\to\M$ and $\beta:\M\to\L$ is given by the composition of maps: $\beta \otimes \alpha: \N\to\L$ defined by $\beta \otimes \alpha := \beta\circ\alpha$, and it is defined accordingly on intertwiners. The involutions and \Cstar-norms in the local categories $\vNMor(\N,\M)$ are the Hilbert space adjoint and the operator norm. This 2-\Cstar-category is also a 2-\Wstar-category \cite{GLR85}, \cite{CHPJP21}, namely the 2-morphism \Cstar-algebras $\vNMor(\N,\M)(\alpha,\beta)$ are also \Wstar-algebras (concrete von Neumann algebras in this case) and the horizontal composition $s\otimes t$ is separately normal (ultraweakly continuous). Several families of abstract 2-\Cstar/\Wstar-categories (semisimple and rigid, \ie admitting a conjugate for every 1-morphism) can in fact be realized in this way on von Neumann algebras. See \cite{HaYa00}, \cite{Yam03}, \cite{BHP12}, \cite{GiYu19}, \cite{HePe20}, \cite{GiYu20}, \cite{BCEGP20}.

The 2-\Cstar-subcategory $\End(\N) = \vNMor(\N,\N)$ with one object $\N$ is a concrete tensor \Cstar-category, and also a \Wstar-category. 
By definition, $Z(\N) := \N'\cap\N = \End(\N)(\id_\N,\id_\N)$, where $\id_{\N}$ is the identity endomorphism of $\N$, \ie the tensor unit of $\End(\N)$.
Note also that $\N'\cap\M = \vNMor(\N,\M)(\iota,\iota)$, if $\N\subset\M$ is a unital inclusion and $\iota:\N\to\M$ is the inclusion morphism.
In particular, $\id_{\N}$ is simple, \ie $\End(\N)(\id_\N,\id_\N) = \CC\oneop$,
if and only if $\N$ is a factor.

\section{Conditional expectations with finite index}\label{sec:condexp}

In this section, we review the theory of index for conditional expectations due to Kosaki \cite{Kos86}, initially considered for subfactors and later further studied in the case of arbitrary inclusions \cite{BDH88}, \cite{PopBook}.
Every unital inclusion of von Neumann algebras with separable predual can be written as $\iota(\N)\subset\M$, with $\iota:\N\to\M$ a morphism in $\vNMor$.

\begin{defi}
A \textbf{conditional expectation} $E$ of $\M$ onto $\iota(\N)$ is a unital completely positive map $E:\M\to\M$ such that $E(\M) = \iota(\N)$ and which is $\iota(\N)$-bimodular, \ie $E(\iota(n_1)m\iota(n_2)) = \iota(n_1)E(m)\iota(n_2)$ for every $m\in\M$, $n_1,n_2\in\N$. See \cite{Sto97} for an overview.
We shall also write $E:\M\to\iota(\N)\subset\M$.
\end{defi}

A conditional expectation $E$ of $\M$ onto $\iota(\N)$ is called normal if it is continuous in the ultraweak operator topology of $\M$, and faithful if $E(m^*m) = 0$ for $m\in\M$ implies $m = 0$.

\begin{defi}
Let $\E(\M,\iota(\N))$ be the set of normal faithful conditional expectations of $\M$ onto $\iota(\N)$.
\end{defi}

Following Kosaki \cite{Kos86}, one can consider the \textbf{index} of an expectation $E\in \E(\M,\iota(\N))$. We recall its definition. Consider the inclusion of commutants $\M' \subset \iota(\N)'$ and denote by $E^{-1}$ the normal faithful semifinite operator-valued weight from $\iota(\N)'$ onto $\M'$ characterized by the following equality of spatial derivatives \cite{Haa79II}, \cite{Con80}:
\begin{align}\label{eq:spderivative}
\frac{d \phi \circ E}{d \psi} = \frac{d \phi}{d \psi \circ E^{-1}},
\end{align}
where $\phi$ and $\psi$ are normal faithful semifinite weights on $\iota(\N)$ and $\M'$, respectively. The operator-valued weight $E^{-1}$ depends only on $E$, not on the chosen weights $\phi$ and $\psi$.
Moreover, $E^{-1}$ is never unital (besides when $\iota(\N) = \M$). The index of $E$ is defined by
$$\Ind(E) := E^{-1}(\oneop).$$
$\Ind(E)$ is in general an element in the extended positive part of $Z(\M)$ \cite{Haa79I} and it does not depend on the Hilbert space representation of $\M$ by the same proof of \cite[\Thm 2.2]{Kos86}. When $\Ind(E)$ is finite, \ie when it is an actual positive element in $Z(\M)$, it is invertible and $\Ind(E)\geq \oneop$. If $\M$ is a factor, as considered in \cite[\Sec 2]{Kos86}, $\Ind(E) = \lambda \oneop$ for some $\lambda \geq 1$.

\begin{defi}\label{def:finindex}
An inclusion $\iota(\N)\subset\M$ is said to have \textbf{finite index} if it admits an expectation $E\in \E(\M,\iota(\N))$ with finite index.  
\end{defi}

\begin{rmk}
For finite index inclusions, by \cite[\Cor 3.18, 3.19]{BDH88}, the finite-dimensionality of $Z(\iota(\N))$, $Z(\M)$ or $\iota(\N)'\cap\M$ are equivalent conditions. See also \cite[\Prop 8.16]{GiLo19} for a 2-\Cstar-categorical proof of this equivalence. In this case, by \cite[\Thm 6.6]{Haa79II}, either all expectations in $\E(\M,\iota(\N))$ have finite index, or none of them has. In the case of infinite-dimensional centers this is no longer true, namely a finite index inclusion may admit normal faithful expectations with infinite index as well.
\end{rmk}

\section{Iterated dual expectations}\label{sec:iterateddualexp}

If $E\in\E(\M,\iota(\N))$ has finite index, which we recall means that $E^{-1}$ is bounded on $\iota(\N)'$ hence defined on $\oneop$, then $E^{-1}$ can be normalized to a normal faithful conditional expectation:
$$E' := \frac{1}{\Ind(E)} E^{-1}.$$ 
Here $1/\Ind(E)$, later also written as $\Ind(E)^{-1}$, denotes the inverse of $\Ind(E)$ in $Z(\M)$.

\begin{defi}
We call $E' \in \E(\iota(\N)',\M')$ the \textbf{dual expectation} of $E$. 
\end{defi}

The operator-valued weight $(E')^{-1}$ from $\M$ onto $\iota(\N)$ defined as in \eqref{eq:spderivative} is also bounded and it has the following easy expression:

\begin{lem}\label{lem:Eprimeinverse}
If $E\in \E(\M,\iota(\N))$ has finite index, then $(E')^{-1} = E(\Ind(E)\slot)$ holds on $\M$. In particular, $E'$ has finite index and $\Ind(E') = E(\Ind(E)) \in Z(\iota(\N))$.
\end{lem}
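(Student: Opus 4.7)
The plan is to establish the explicit formula $(E')^{-1}(m) = E(\Ind(E)\,m)$ for every $m \in \M$ first, and then to read off both the finiteness of $\Ind(E')$ and its location in $Z(\iota(\N))$ by evaluating at $m = \oneop$ and using bimodularity of $E$ together with centrality of $\Ind(E)$.

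For the identity itself, I would use Haagerup's involution $(T^{-1})^{-1} = T$ for normal faithful semifinite operator-valued weights together with the scaling rule for spatial derivatives under multiplication by positive invertible central elements. Setting $h := \Ind(E)^{-1} \in Z(\M) = Z(\M')$, so that $E' = h\,E^{-1}$, the defining relation for $(E')^{-1}$
\[
\frac{d(\phi \circ E')}{d\psi} = \frac{d\phi}{d(\psi \circ (E')^{-1})},
\]
with $\phi$ a normal semifinite weight on $\M'$ and $\psi$ one on $\iota(\N)$, is to be rewritten as follows. Centrality of $h$ in $\M'$ gives $\phi \circ E' = \phi_h \circ E^{-1}$ for $\phi_h(\slot) := \phi(h\slot)$. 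Applying Haagerup duality to $E^{-1}$, whose inverse is $E$, the left-hand side becomes $d\phi_h/d(\psi \circ E)$, and the scaling rules $d\phi_h/d\omega = h\,d\phi/d\omega$ and $d\phi/d(\omega_k) = k^{-1}\,d\phi/d\omega$ (for $h,k$ positive invertible in the appropriate center, with $\omega_k(\slot) := \omega(k\slot)$) combine to produce $d\phi/d\bigl((\psi \circ E)_{\Ind(E)}\bigr)$ on the right, taking $k = h^{-1} = \Ind(E)$. Comparing this with the right-hand side of the defining relation and letting $\psi$ vary yields $\psi \circ (E')^{-1} = (\psi \circ E)(\Ind(E)\slot)$, hence $(E')^{-1}(m) = E(\Ind(E)\,m)$ for every $m \in \M$.

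Setting $m = \oneop$ then gives $\Ind(E') = E(\Ind(E))$, a bounded positive element of $\iota(\N)$ since $E$ is normal and $\Ind(E) \in \M$. Finally, for every $n \in \N$, centrality of $\Ind(E) \in Z(\M)$ together with $\iota(\N)$-bimodularity of $E$ yields
\[
E(\Ind(E))\,\iota(n) = E(\Ind(E)\,\iota(n)) = E(\iota(n)\,\Ind(E)) = \iota(n)\,E(\Ind(E)),
\]
placing $E(\Ind(E)) \in \iota(\N)' \cap \iota(\N) = Z(\iota(\N))$, which completes the argument.

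The main technical hurdle is the scaling law $d\phi_h/d\omega = h\,d\phi/d\omega$ (and its companion on the second slot) for $h$ positive invertible in the relevant center. A cleaner packaging — and probably the route best suited for the paper — is to isolate beforehand the general identity $(hT)^{-1}(x) = T^{-1}(h^{-1}x)$ for any normal faithful semifinite operator-valued weight $T$ and any positive invertible $h$ in the center of the codomain of $T$; this encapsulates exactly the needed spatial-derivative manipulation, and the lemma follows immediately by specializing to $T = E^{-1}$ and $h = \Ind(E)^{-1}$.
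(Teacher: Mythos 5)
Your argument is correct and follows essentially the same route as the paper: both proofs establish $(E')^{-1}=E(\Ind(E)\,\slot)$ by combining the defining relation \eqref{eq:spderivative}, the involutivity $(E^{-1})^{-1}=E$, and Connes' scaling rules for spatial derivatives under multiplication by a positive invertible central element, differing only in whether the central factor $\Ind(E)^{\pm 1}$ is absorbed into the weight on $\M'$ or into the weight on $\iota(\N)$. The technical point you flag is handled in the paper by keeping the symmetric form $\phi(\Ind(E)^{1/2}\slot\Ind(E)^{1/2})$ and invoking \cite[\Prop 8, \Thm 9]{Con80}, which is exactly the packaging you propose.
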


\begin{proof}
Choose normal faithful semifinite weights $\phi$ and $\psi$ on $\M'$ and $\iota(\N)$, respectively, and set $\widetilde\phi := \phi(\Ind(E) \slot) = \phi(\Ind(E)^{1/2} \slot \Ind(E)^{1/2})$ on $\M'$. 
Then
$$\frac{d \widetilde\phi}{d \psi \circ (E')^{-1}} = \frac{d \widetilde\phi \circ E'}{d \psi} = \frac{d \phi \circ E^{-1}}{d \psi} = \frac{d \phi}{d \psi \circ E}$$
because $(E^{-1})^{-1} = E$ \cite{Kos86}. By \cite[\Prop 8]{Con80}, the left hand side of the above equation is equal to $(\Ind(E)^{1/2}) (d \phi/d \psi \circ (E')^{-1}) (\Ind(E)^{1/2})$. Hence taking the inverse of the previous equality, by \cite[\Thm 9]{Con80} and \cite[\Prop 8]{Con80}, we have the desired formula for $(E')^{-1}$. 
\end{proof}

\begin{defi}
We consider the \textbf{bidual expectation} $E'' \in \E(\M,\iota(\N))$ of $E$ obtained by normalizing $(E')^{-1}$:
$$E'' := \frac{1}{E(\Ind(E))} E(\Ind(E) \slot).$$
\end{defi}

If $\M$ is not a factor or $\Ind(E)$ is not a scalar, the bidual expectation $E''$ need \emph{not} coincide with $E$. We have the following characterization:

\begin{prop}\label{prop:bidual}
Let $E\in \E(\M,\iota(\N))$ with finite index. Then $E = E''$ if and only if $E(\Ind(E)) = \Ind(E)$.
\end{prop}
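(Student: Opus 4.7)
The plan is to handle the two directions separately, with the backward implication being a direct computation and the forward direction relying on the Kadison--Schwarz inequality together with the characterization of the multiplicative domain of a conditional expectation.

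For the backward direction ($\Leftarrow$), I assume $E(\Ind(E)) = \Ind(E)$. Since $E$ takes values in $\iota(\N)$, this already forces $\Ind(E) \in \iota(\N) \cap Z(\M)$. By $\iota(\N)$-bimodularity of $E$, for every $m\in\M$ we then have $E(\Ind(E) m) = \Ind(E) E(m)$, so that
\begin{align*}
E''(m) = E(\Ind(E))^{-1} E(\Ind(E) m) = \Ind(E)^{-1} \Ind(E) E(m) = E(m),
\end{align*}
which gives $E'' = E$.

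For the forward direction ($\Rightarrow$), I assume $E = E''$, which rewrites as the identity $E(\Ind(E) m) = E(\Ind(E)) E(m)$ for every $m\in\M$. Specializing to $m = \Ind(E)$ yields
\begin{align*}
E(\Ind(E)^2) = E(\Ind(E))^2.
\end{align*}
Since $\Ind(E)$ is positive (hence self-adjoint), this is precisely the equality case of the Kadison--Schwarz inequality $E(a^2) \geq E(a)^2$ for the unital completely positive map $E$. The standard characterization of the multiplicative domain of a 2-positive unital map (Choi's theorem), specialized to a conditional expectation $E : \M \to \iota(\N)$, says that the multiplicative domain coincides with the image $\iota(\N)$. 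Consequently $\Ind(E) \in \iota(\N)$, and since $E$ restricts to the identity on $\iota(\N)$ by $\iota(\N)$-bimodularity, we conclude $E(\Ind(E)) = \Ind(E)$.

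The only nontrivial ingredient is the multiplicative domain argument, and I expect that to be the main point worth spelling out carefully (including a reference for Choi's theorem and the fact that the multiplicative domain of a normal faithful conditional expectation equals its range). Once that is in place, both directions collapse to short manipulations using only bimodularity, the centrality of $\Ind(E)$ in $\M$, and Lemma \ref{lem:Eprimeinverse} (which guarantees that $E(\Ind(E))$ is an invertible central element of $\iota(\N)$, so that the normalization defining $E''$ makes sense).
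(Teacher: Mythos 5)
Your proof is correct and follows essentially the same route as the paper: both directions reduce to the fact that the multiplicative domain of a normal faithful conditional expectation equals its range $\iota(\N)$ (the paper's Lemma \ref{lem:multdomainE}, proved by the faithfulness computation you allude to). The only difference is that your detour through the equality case of the Kadison--Schwarz inequality is superfluous, since $E=E''$ already states verbatim that $\Ind(E)$ lies in the multiplicative domain $M_E = \{x\in\M: E(xm)=E(x)E(m) \text{ for all } m\in\M\}$, with no need to specialize to $m=\Ind(E)$ and then invoke Choi's theorem to recover the general identity.
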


\begin{proof}
$E = E''$ is equivalent to $E(\Ind(E)m) = E(\Ind(E)) E(m)$ for every $m\in\M$, \ie $\Ind(E)$ is in the multiplicative domain of $E$. Thus the proof follows from Lemma \ref{lem:multdomainE}.
\end{proof}

\begin{lem}\label{lem:multdomainE}
Let $E\in \E(\M,\iota(\N))$. Then the multiplicative domain of $E$ \emph{\cite{Cho74}} denoted by $M_E := \{x\in\M: E(xm) = E(x)E(m) \text{ for every }m\in\M\}$ coincides with the range $\iota(\N)$. 
\end{lem}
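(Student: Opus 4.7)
The plan is to prove both inclusions directly, using only the defining bimodularity and faithfulness of $E$; no appeal to the full Choi--Schwarz theory of multiplicative domains is needed.

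First, the easy inclusion $\iota(\N) \subseteq M_E$ follows immediately from the $\iota(\N)$-bimodularity of $E$: if $x = \iota(n)$, then $E(xm) = E(\iota(n)m) = \iota(n)E(m) = E(x)E(m)$ for every $m\in\M$. For the reverse inclusion, let $x\in M_E$ and introduce the \textbf{residue}
\begin{equation*}
e := x - E(x) \in \M.
\end{equation*}
Because $E$ restricts to the identity on its range $\iota(\N)$, one has $E\circ E = E$, and therefore $E(e) = E(x) - E(E(x)) = 0$.

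Next, I would test $e$ against arbitrary $m \in \M$ and use both the hypothesis on $x$ and the left $\iota(\N)$-linearity of $E$ (a special case of bimodularity, applicable because $E(x) \in \iota(\N)$):
\begin{equation*}
E(em) = E(xm) - E(E(x)m) = E(x)E(m) - E(x)E(m) = 0.
\end{equation*}
Specializing to $m = e^*$ gives $E(ee^*) = 0$, whence faithfulness of $E$ forces $ee^* = 0$, i.e.\ $e = 0$. Thus $x = E(x) \in \iota(\N)$, completing the proof.

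I do not anticipate a genuine obstacle; the only thing to keep in mind is that the definition of $M_E$ in the statement is the one-sided version $E(xm) = E(x)E(m)$, so one must resist the temptation to also use $E(mx) = E(m)E(x)$. The argument above is chosen precisely so that only the one-sided identity (together with faithfulness and the projection property $E^2 = E$) is invoked.
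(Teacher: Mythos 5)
Your proof is correct and is essentially the paper's own argument: both set $e := x - E(x)$, show that $E$ annihilates the product of $e$ with its adjoint (using bimodularity, the membership $x\in M_E$, and $E^2=E$), and conclude $e=0$ by faithfulness. If anything, your choice to compute $E(ee^*)$ rather than $E(e^*e)$ is the one that matches the one-sided definition $E(xm)=E(x)E(m)$ literally, so your write-up is marginally more careful on that point than the paper's.
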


\begin{proof}
Clearly $\iota(\N) \subset M_E$. Conversely, if $x\in M_E$ then $E((x-E(x))^*(x-E(x))) = E(x-E(x))^*E(x-E(x)) = 0$ because $E^2 = E$. Thus $x = E(x)$ by faithfulness of $E$. 
\footnote{We thank Jesse Peterson for providing this short proof that we could not find in the literature.}
\end{proof}

\begin{rmk}
The equality $E(\Ind(E)) = \Ind(E)$ is equivalent to $\Ind(E)\in Z(\M)\cap Z(\iota(\N))$ and it is not always satisfied. Indeed, let $\iota:\N\to\M$ be such that the inclusion $\iota(\N)\subset\M$ is \emph{connected} in the terminology of \cite{GHJ89}, namely $Z(\iota(\N))\cap Z(\M) = \CC\oneop$.
There exist conditional expectations in $\E(\M,\iota(\N))$ with finite non-scalar index, as one can easily construct \eg in the case of $\N$ and $\M$ with finite-dimensional centers.
\end{rmk}

Iterating the process of taking dual expectations, let $E^{(n)}$ be the $n$-th dual expectation of $E$ for $n\in\NN$. In particular, $E^{(0)} = E$, $E^{(1)} = E'$, $E^{(2)} = E''$. Thus $E^{(n)}$ is either in $\E(\M,\iota(\N))$ or in $\E(\iota(\N)',\M')$, depending on the parity of $n$. 
By arguing as in the proof of Lemma \ref{lem:Eprimeinverse}, one can compute the expressions of the operator-valued weights $(E^{(n)})^{-1}$, thus the indices of every $E^{(n)}$:

\begin{prop}\label{prop:interateddualexp}
Let $E\in \E(\M,\iota(\N))$ with finite index. Then $\Ind(E'') = E'(E(\Ind(E)))$, $\Ind(E''') = E''(E'(E(\Ind(E))))$. More generally, for $n\geq 3$, 
$$\Ind(E^{(n)}) = E^{(n-1)}(\cdots E^{(1)}(E(\Ind(E)))),$$
or iteratively
$$\Ind(E^{(n)}) = E^{(n-1)}(\Ind(E^{(n-1)})),$$
where $\Ind(E^{(n)})$ is either in $Z(\M)$ or in $Z(\iota(\N))$ depending on the parity of $n$.
\end{prop}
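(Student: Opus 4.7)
The plan is a straightforward induction on $n$, iterating Lemma \ref{lem:Eprimeinverse} with $E$ replaced by the preceding dual expectation $E^{(n-1)}$. The base case $n=1$ is exactly Lemma \ref{lem:Eprimeinverse}. For $n=2$, applying that lemma to $E'\in\E(\iota(\N)',\M')$ in place of $E$ gives $(E'')^{-1} = E'(\Ind(E')\,\slot)$ on $\iota(\N)'$, and since $\Ind(E') = E(\Ind(E))$, one reads off $\Ind(E'') = E'(E(\Ind(E)))$. The case $n=3$ is identical with $E''$ in place of $E'$.

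For the inductive step, I would assume that $E^{(n-1)}$ has finite index with $\Ind(E^{(n-1)})$ given by the nested expression (in particular, by a bounded positive element of the appropriate center), and then repeat verbatim the spatial-derivative argument of Lemma \ref{lem:Eprimeinverse}: choose normal faithful semifinite weights on the appropriate algebra and its commutant, set $\widetilde\phi := \phi(\Ind(E^{(n-1)})\,\slot)$, use the chain of identities
$$\frac{d\widetilde\phi}{d\psi\circ(E^{(n)})^{-1}} = \frac{d\widetilde\phi\circ E^{(n)}}{d\psi} = \frac{d\phi\circ(E^{(n-1)})^{-1}}{d\psi} = \frac{d\phi}{d\psi\circ E^{(n-1)}},$$
together with $(F^{-1})^{-1}=F$ from \cite{Kos86} and \cite[\Prop 8, \Thm 9]{Con80}, to conclude
$$(E^{(n)})^{-1} = E^{(n-1)}(\Ind(E^{(n-1)})\,\slot).$$
Evaluating at $\oneop$ yields the iterative formula $\Ind(E^{(n)}) = E^{(n-1)}(\Ind(E^{(n-1)}))$, and unrolling the recursion produces the nested expression. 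For this to make sense at each stage I need that $\Ind(E^{(n-1)})$ lies in the domain of $E^{(n-1)}$, which is automatic: centers of commuting pairs coincide, so $Z(\M)\subset\M$ and $Z(\iota(\N)) = Z(\iota(\N)')\subset\iota(\N)'$, and these two cases alternate in step with which algebra $E^{(n-1)}$ is defined on.

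The parity statement then drops out of this bookkeeping: $E^{(n)}\in\E(\M,\iota(\N))$ for even $n$, hence $\Ind(E^{(n)})\in Z(\M)$; while $E^{(n)}\in\E(\iota(\N)',\M')$ for odd $n$, hence $\Ind(E^{(n)})\in Z(\iota(\N)') = Z(\iota(\N))$. I do not anticipate any real obstacle beyond careful tracking of which algebra each element inhabits at each step of the tower, since the analytic content is entirely inherited from the proof of Lemma \ref{lem:Eprimeinverse}.
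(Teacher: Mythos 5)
Your proposal is correct and coincides with the paper's own argument: the paper gives no separate proof of Proposition \ref{prop:interateddualexp}, stating only that one argues as in the proof of Lemma \ref{lem:Eprimeinverse} applied to each $E^{(n-1)}$ in turn, which is exactly your induction. The bookkeeping you supply (finiteness propagating through the recursion, and $Z(\iota(\N)')=Z(\iota(\N))$ for the parity claim) is the right justification for the steps the paper leaves implicit.
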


Assume for the moment that $\iota(\N) \subset \M$ is connected, namely $Z(\iota(\N))\cap Z(\M) = \CC\oneop$. This assumption is not very restrictive in the sense that every inclusion with a given conditional expectation can be decomposed as a direct sum or direct integral of connected inclusions \cite[\Rmk 3.4]{Ter92}, \cite[\Thm 1]{FiIs96}. If for some $E\in\E(\M,\iota(\N))$ the sequence of operators $\Ind(E^{(n)})$ happens to converge, \eg in the strong operator topology, then the limit belongs to $Z(\M) \cap Z(\iota(\N))$, hence it is a scalar multiple of $\oneop$. If we denote this scalar by $[E]$, then $\lim_n \Ind(E^{(n)}) = [E] \oneop$, $[E] \geq 1$, and one can regard it as a scalar notion of index for $E$. 

We don't know wether the sequence $\Ind(E^{(n)})$ converges for an arbitrary $E\in\E(\M,\iota(\N))$. It is bounded and positive but not monotone in general, as one can check in the case of finite-dimensional centers by using \cite[\Thm 2.5]{Hav90}, \cite[\Prop 2.3]{Ter92}.

\section{Dual expectation and the Jones tower}\label{sec:dualexp}

In this section, we review some definitions and we transport the dual expectation $E'$ of a finite index expectation $E\in\E(\M,\iota(\N))$ one step down and one step up in the Jones tower.

From now on we assume that $\N$ and $\M$ are \emph{properly infinite} von Neumann algebras.

\begin{rmk}
If $\N$ and $\M$ are not properly infinite, tensor both algebras with a type $I_\infty$ factor $\cF$ with separable predual, and replace $\iota$ and $E$ respectively with $\iota \otimes \id_\cF$ and $E \otimes \id_\cF$. By a result of Combes and Delaroche \cite[\Lem 2.3]{CoDe75}, every conditional expectation of $\M\otimes\cF$ onto $\iota(\N)\otimes\cF$ is of this form. Namely, $\E(\M\otimes\cF,\iota(\N)\otimes\cF) = \E(\M,\iota(\N))\otimes\id_\cF$. 
Moreover, by the same arguments leading to \cite[\Prop 3.6, \Lem 3.7]{Tsu91}, $\Ind(E \otimes \id_\cF) = \Ind(E) \otimes 1_\cF$.
\end{rmk}

Let $\xi$ be a \emph{jointly} cyclic and separating vector for $\iota(\N)$ and $\M$, whose existence is guaranteed in the properly infinite case by \cite{DiMa71}. Denote by $J_{\iota(\N),\xi}$ and $J_{\M,\xi}$ the respective modular conjugations. Let $j_{\iota(\N)} := \Ad{J_{\iota(\N),\xi}}$ and $j_{\M} := \Ad{J_{\M,\xi}}$ be their adjoint actions, and let $\gamma := j_{\iota(\N)} \circ j_{\M} : \M \to \M$ be the \textbf{canonical endomorphism} of $\M$ defined by the inclusion $\iota(\N) \subset\M$ and by the vector $\xi$ \cite{Lon87}. Note that $\gamma$ depends on $\xi$ only up to conjugation with a unitary in $\iota(\N)$ \cite[\Sec 1]{Lon87}. By definition, 
\begin{align}\label{eq:Jonestower}
\gamma(\M) \subset \iota(\N) \subset \M \subset \M_1
\end{align}
is the beginning of the Jones tower \cite{Jon83}, where $\M_1 := j_{\M}(\iota(\N)')$.
The Jones extension $\M_1$ coincides with the von Neumann algebra generated by $\M$ and by the Jones projection of $E$. The canonical endomorphism can be defined at every level of the Jones tower and it provides spatial isomorphisms two steps up and two steps down in the tower. See \cite[\Sec 2.5]{LoRe95} and \cite[\Sec 2.1, 2.2]{BDG21-online} for a review.

By composing with the modular conjugations, one can view the dual expectation $E'$ either as an element in $\E(\M_1,\M)$, by setting $\Ehat := j_\M \circ E' \circ j_\M$, or as an element in $\E(\iota(\N),\gamma(\M))$, by setting $\Echeck := j_{\iota(\N)} \circ E' \circ j_{\iota(\N)}$. We rewrite \eqref{eq:Jonestower} with $\Ehat$ and $\Echeck$:
\begin{align}
\gamma(\M) \stackrel{\scalebox{0.8}{\Echeck}}{\subset} \iota(\N) \stackrel{E}{\subset} \M \stackrel{\Ehat}{\subset} \M_1.
\end{align}

\begin{lem}\label{lem:IndEcheckEhat}
It holds $\Ind(\Ehat) = j_{\M}(\Ind(E')) \in Z(\M_1) $ and $\Ind(\Echeck) = \Ind(E') \in Z(\iota(\N))$.
\end{lem}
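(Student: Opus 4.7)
The plan is to exploit the naturality of Kosaki's dual operator-valued weight under conjugation by antiunitaries. Concretely, I would first isolate the following general principle: if $V$ is an antiunitary on the underlying Hilbert space and $\alpha := \Ad V$ is the induced antilinear $*$-automorphism of $\B(H)$, and if $F: \A \to \B \subset \A$ is a normal faithful conditional expectation between von Neumann subalgebras of $\B(H)$, then $\alpha \circ F \circ \alpha^{-1}$ is a normal faithful conditional expectation from $\alpha(\A)$ onto $\alpha(\B)$, and its Kosaki dual satisfies $(\alpha \circ F \circ \alpha^{-1})^{-1} = \alpha \circ F^{-1} \circ \alpha^{-1}$; in particular $\Ind(\alpha \circ F \circ \alpha^{-1}) = \alpha(\Ind(F))$. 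This naturality follows from the defining equation \eqref{eq:spderivative}: antiunitary conjugation transports Connes' spatial derivatives naturally, so both sides of the defining equation for $F^{-1}$ transport to the defining equation for $(\alpha \circ F \circ \alpha^{-1})^{-1}$, and uniqueness of the dual operator-valued weight yields the formula.

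Granting this naturality, the lemma drops out. For $\Ehat = j_\M \circ E' \circ j_\M$, the principle gives $\Ind(\Ehat) = j_\M(\Ind(E'))$. Since $\Ind(E') \in Z(\iota(\N))$ by Lemma \ref{lem:Eprimeinverse}, and since $j_\M$ is an antilinear $*$-iso of $\B(H)$ preserving commutants, we have $j_\M(\iota(\N)) = (j_\M(\iota(\N)'))' = \M_1'$, hence $j_\M(Z(\iota(\N))) = Z(j_\M(\iota(\N))) = Z(\M_1') = Z(\M_1)$, confirming $\Ind(\Ehat) \in Z(\M_1)$. For $\Echeck = j_{\iota(\N)} \circ E' \circ j_{\iota(\N)}$, the same principle yields $\Ind(\Echeck) = j_{\iota(\N)}(\Ind(E'))$. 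Here I would invoke the classical modular-theoretic fact that the modular conjugation acts as the involution on the center of its defining algebra, i.e. $J_{\iota(\N),\xi}\, z\, J_{\iota(\N),\xi} = z^*$ for every $z \in Z(\iota(\N))$ (checked from $S(z\xi) = z^*\xi$ together with $\Delta^{1/2}\xi = \xi$ and the fact that central elements are fixed by the modular automorphism, hence commute with $\Delta^{1/2}$). Since $\Ind(E') \geq 0$ is self-adjoint, this gives $j_{\iota(\N)}(\Ind(E')) = \Ind(E')$, as required.

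The main obstacle is stating and verifying the naturality principle cleanly under antilinear $*$-isomorphisms; the statement is essentially folklore but the antilinearity requires some pedantic bookkeeping in the spatial-derivative definition (one has to be careful that pulling back a weight by $\alpha$ and identifying $d(\phi\circ\alpha)/d(\psi\circ\alpha)$ with $V^{-1}(d\phi/d\psi)V$ is done consistently with all conjugate-linear conventions). Once that is in place, the rest of the argument is a direct substitution combined with the elementary fact about modular conjugations on centers.
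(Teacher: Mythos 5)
Your proposal is correct and follows essentially the same route as the paper: the paper's one-line proof invokes \cite[Lem.\ 1.3]{Kos86} for exactly the naturality of the Kosaki dual under conjugation by (anti)unitaries that you formulate and sketch, then combines it with Lemma \ref{lem:Eprimeinverse} (placing $\Ind(E')$ in $Z(\iota(\N))$) and the fact that $j_{\iota(\N)}$ acts trivially on positive elements of $Z(\iota(\N))$, just as you do. Your additional verifications ($j_\M(Z(\iota(\N)))=Z(\M_1)$ via $\M_1=j_\M(\iota(\N)')$, and $JzJ=z^*$ on the center) are the right supporting details and are consistent with the paper's argument.
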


\begin{proof}
It follows by using \cite[\Lem 1.3]{Kos86}, Lemma \ref{lem:Eprimeinverse} and the fact that $j_{\iota(\N)}$ acts trivially on positive elements in $Z(\iota(\N))$.
\end{proof}

\begin{lem}\label{lem:Eprimeprime}
In the above notation, 
$$(\Echeck)^\wedge = E'', \quad (\Ehat)^\vee = E'',$$
where $E''$ is the bidual expectation.
\end{lem}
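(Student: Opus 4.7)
The plan is to deduce both equalities from a single auxiliary identity: taking the Kosaki dual commutes with conjugation by an antiunitary. Precisely, for every finite-index $F\in\E(\B,\A)$ with $\A\subset\B\subset B(\Hilb)$, and every anti-*-automorphism $j=\Ad J$ of $B(\Hilb)$ with $J$ antiunitary,
\begin{align}\label{eq:dualcommutesj}
(j\circ F\circ j)' = j\circ F'\circ j.
\end{align}
Granted \eqref{eq:dualcommutesj}, both claims reduce to cancelling pairs $j^2=\id$. Substituting $F:=E'$ and $j:=j_{\iota(\N)}$ one gets $(\Echeck)'=j_{\iota(\N)}\circ E''\circ j_{\iota(\N)}$, and therefore
\begin{align}
(\Echeck)^\wedge = j_{\iota(\N)}\circ(\Echeck)'\circ j_{\iota(\N)} = j_{\iota(\N)}\circ j_{\iota(\N)}\circ E''\circ j_{\iota(\N)}\circ j_{\iota(\N)} = E''.
\end{align}
The twin substitution $F:=E'$, $j:=j_\M$ gives $(\Ehat)^\vee=E''$. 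Matching of domains and codomains is automatic from $\gamma=j_{\iota(\N)}\circ j_\M$ together with the identities $j_{\iota(\N)}(\gamma(\M)')=\M$, $j_{\iota(\N)}(\iota(\N)')=\iota(\N)$, $j_\M(\M')=\M$, $j_\M(\M_1')=\iota(\N)$.

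To prove the auxiliary identity \eqref{eq:dualcommutesj}, I would argue at the level of Haagerup's operator-valued weight $F^{-1}$, recalling that $F'=(\Ind F)^{-1}F^{-1}$. Using Connes' transformation rule $J(d\chi/d\omega)J=d(\chi\circ j)/d(\omega\circ j)$ for the spatial derivative \cite{Con80}, applied to the weights $\phi\circ j$ on $\A$ and $\psi\circ j$ on $\B'$ and the defining equation \eqref{eq:spderivative} of $F^{-1}$, both sides transform compatibly and uniqueness forces $(j\circ F\circ j)^{-1}=j\circ F^{-1}\circ j$. In particular $\Ind(j\circ F\circ j)=j(\Ind F)$, which is also the content of \cite[\Lem 1.3]{Kos86} already invoked in Lemma \ref{lem:IndEcheckEhat}. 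Normalizing by the indices and using that $1/\Ind(F)\in Z(\B)$ commutes past $F^{-1}(\slot)\in\B'$ elementwise (so the anti-multiplicativity of $j$ causes no trouble when moving it across the operator-valued weight) produces \eqref{eq:dualcommutesj}.

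The main obstacle is the careful handling of the spatial-derivative transformation rule (applied to weights on the correct pair of commuting algebras) and the anti-linear manipulations around the central scalar $1/\Ind(F)$; everything beyond that is routine cancellation of $j^2=\id$.
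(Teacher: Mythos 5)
Your proposal is correct and follows essentially the same route as the paper: the paper's proof likewise reduces both equalities to the identities $(\Echeck)' = j_{\iota(\N)} \circ E'' \circ j_{\iota(\N)}$ and $(\Ehat)' = j_{\M} \circ E'' \circ j_{\M}$ and then cancels $j^2=\id$, with the compatibility of the dual operator-valued weight under conjugation by modular conjugations delegated to \cite[Lem.\ 1.3]{Kos86} (as in Lemma \ref{lem:IndEcheckEhat}) rather than re-derived from the spatial-derivative transformation rule as you sketch. Your explicit normalization argument, using that $\Ind(F)^{-1}\in Z(\B)$ commutes with the values of $F^{-1}$, is exactly the content hidden in that citation.
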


\begin{proof}
Immediate from $(\Echeck)' = j_{\iota(\N)} \circ E'' \circ j_{\iota(\N)}$ and $(\Ehat)' = j_{\M} \circ E'' \circ j_{\M}$, which can be checked as in Lemma \ref{lem:IndEcheckEhat}. 
\end{proof}

\section{Expectations are solutions of the conjugate equations}\label{sec:expareconjeqnssols}

In this section, we show that finite index expectations and their duals correspond to the solutions of the conjugate equations for the inclusion morphism and its conjugate morphism. In particular, this gives a 2-\Cstar-categorical description of the Jones--Kosaki index for conditional expectations \cite{Jon83}, \cite{Kos86}, \cite{BDH88}, \cite{PopBook} between arbitrary von Neumann algebras. 

Assume that $\N$ and $\M$ are properly infinite von Neumann algebras in standard form. Following \cite[\Sec 3]{Lon90}, we call $\iotabar : \M \to \N$ a \textbf{conjugate morphism} of $\iota:\N\to\M$ in the sense of Connes' bimodules, not necessarily in the 2-\Cstar-categorical sense of Section \ref{sec:conjin2Cstarcats}, if 
$$\iotabar := \iota^{-1} \circ \gamma,$$ 
where $\gamma$ is a canonical endomorphism of $\M$ defined by $\iota(\N) \subset\M$ as in the previous section. The formula for $\iotabar$ makes sense because $\gamma(\M) \subset \iota(\N)$. Note that if $\iota(\N)=\M$, then $\gamma = \id_\M$ and $\iotabar = \iota^{-1}$. 
In general, $\overline{\iotabar} \cong \iota$ (unitary 2-isomorphism) see \eg \cite[\Sec 2.2]{Lon18}. By definition, 
\begin{align}\label{eq:gammaisiotaiotabar}
\gamma = \iota \circ \iotabar.
\end{align}

\begin{defi}
Let $E\in\E(\M,\iota(\N))$ with finite index. In the notation of the previous section, define $\Ebar := \iota^{-1} \circ \Echeck \circ \iota$, then $\Ebar \in \E(\N,\iotabar(\M))$. We call $\Ebar$ a \textbf{conjugate expectation} of $E$ because it is associated with a conjugate morphism $\iotabar:\M\to\N$.
\end{defi}

\begin{rmk}\label{rmk:IndEbar}
It holds $\Ind(\Ebar) = \iota^{-1}(\Ind(E')) \in Z(\N)$ by Lemma \ref{lem:IndEcheckEhat} and $Z(\iota(\N)) = \iota(Z(\N))$.
\end{rmk}

Let $\eta$ be a jointly cyclic and separating vector for $\iotabar(\M)$ and $\N$. Denote $j_{\iotabar(\M)} := \Ad{J_{\iotabar(\M),\eta}}$ and $j_{\N} := \Ad{J_{\N,\eta}}$ as in the previous section for the vector $\xi$ and the algebras $\iota(\N)$ and $\M$. Unlike what happens with the double conjugate morphism of $\iota$, namely $\bar\iotabar \cong \iota$, the double conjugate expectation of $E$ need \emph{not} coincide with $E$, up to unitary conjugation:

\begin{prop}\label{prop:Ebarbar}
Let $E\in \E(\M,\iota(\N))$ with finite index. Then $\overline{\Ebar} = \alpha^{-1} \circ E'' \circ \alpha$, where $\alpha := j_{\iota(\N)} \circ \iota \circ j_{\iotabar(\M)} \circ \iotabar$ is an automorphism of $\M$ mapping $\overline{\iotabar}(\N)$ onto $\iota(\N)$, and $E''$ is the bidual expectation of $E$.
\end{prop}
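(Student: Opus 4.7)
Plan. The strategy is to apply to $\Ebar$ the same construction that produced $\Ebar$ from $E$, thereby unfolding $\overline{\Ebar}$, and then rewrite everything so that $E''$ emerges in the middle, flanked by $\alpha^{\pm 1}$.

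First, I would apply the conjugate-expectation construction to $\Ebar \in \E(\N, \iotabar(\M))$, using the canonical endomorphism $\bar\gamma = j_{\iotabar(\M)} \circ j_\N$ associated with $\iotabar(\M) \subset \N$ and the jointly cyclic-separating vector $\eta$. By direct analogy with the definitions of $\Echeck$ and $\Ebar$ built from $E$, this yields $\overline{\Ebar} = \iotabar^{-1} \circ (\Ebar)^\vee \circ \iotabar$, where $(\Ebar)^\vee := j_{\iotabar(\M)} \circ (\Ebar)' \circ j_{\iotabar(\M)} \in \E(\iotabar(\M), \bar\gamma(\N))$ plays the role that $\Echeck$ played for $E$.

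The second step is to rewrite $(\Ebar)'$ in terms of $(\Echeck)'$. Since $\Ebar = \iota^{-1} \circ \Echeck \circ \iota$ and $\iota: \N \to \iota(\N)$ is an isomorphism of von Neumann algebras (spatially implementable in the properly infinite case, so that $\iota$ extends to an ambient spatial isomorphism carrying commutants to commutants), Kosaki's spatial-derivative definition of the dual operator-valued weight is natural under this extension; combined with the index identity $\Ind(\Ebar) = \iota^{-1}(\Ind(\Echeck))$ of Remark \ref{rmk:IndEbar}, one deduces $(\Ebar)' = \iota^{-1} \circ (\Echeck)' \circ \iota$. Lemma \ref{lem:Eprimeprime} then gives $(\Echeck)^\wedge = E''$ and $(\Echeck)^\wedge = j_{\iota(\N)} \circ (\Echeck)' \circ j_{\iota(\N)}$; by involutivity of $j_{\iota(\N)}$, this yields $(\Echeck)' = j_{\iota(\N)} \circ E'' \circ j_{\iota(\N)}$. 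Chaining everything together,
\[
\overline{\Ebar} \;=\; \iotabar^{-1} \circ j_{\iotabar(\M)} \circ \iota^{-1} \circ j_{\iota(\N)} \circ E'' \circ j_{\iota(\N)} \circ \iota \circ j_{\iotabar(\M)} \circ \iotabar \;=\; \alpha^{-1} \circ E'' \circ \alpha,
\]
where the last equality uses $\alpha^{-1} = \iotabar^{-1} \circ j_{\iotabar(\M)} \circ \iota^{-1} \circ j_{\iota(\N)}$, a consequence of the involutivity of the four modular conjugations.

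Finally, to verify that $\alpha$ is an automorphism of $\M$ sending $\overline{\iotabar}(\N)$ onto $\iota(\N)$, I would track images along the four factors of $\alpha$: starting with $\iotabar(\M) \mapsto \iotabar(\M)'$ under $j_{\iotabar(\M)}$, then $\iota(\iotabar(\M)') = \iota(\iotabar(\M))' = \gamma(\M)'$ using \eqref{eq:gammaisiotaiotabar} and the spatial extension of $\iota$, and finally $j_{\iota(\N)}(\gamma(\M)') = \M$ since $\gamma = j_{\iota(\N)} \circ j_\M$, which gives $\alpha(\M) = \M$; the parallel chain $\overline{\iotabar}(\N) = \iotabar^{-1}(\bar\gamma(\N)) \mapsto \bar\gamma(\N) \mapsto j_{\iotabar(\M)}(\bar\gamma(\N)) = \N' \mapsto \iota(\N') = \iota(\N)' \mapsto \iota(\N)$ yields $\alpha(\overline{\iotabar}(\N)) = \iota(\N)$. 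The main obstacle is the naturality identity $(\Ebar)' = \iota^{-1} \circ (\Echeck)' \circ \iota$, which must be justified by extending $\iota$ spatially with enough care that the spatial derivatives defining the dual operator-valued weight transport correctly between the two ambient Hilbert spaces.
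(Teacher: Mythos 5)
Your proposal is correct and follows essentially the same route as the paper: the paper's (one-line) proof likewise unfolds $\overline{\Ebar}=\iotabar^{-1}\circ j_{\iotabar(\M)}\circ(\Ebar)'\circ j_{\iotabar(\M)}\circ\iotabar$, transports $(\Ebar)'$ to $(\Echeck)'$ via the spatially implemented isomorphism $\iota$ (the content of \cite[Lem.\ 1.3]{Kos86} as in the proof of Lemma \ref{lem:IndEcheckEhat}), and invokes $(\Echeck)'=j_{\iota(\N)}\circ E''\circ j_{\iota(\N)}$ from Lemma \ref{lem:Eprimeprime}. Your expansion, including the verification that $\alpha(\M)=\M$ and $\alpha(\overline{\iotabar}(\N))=\iota(\N)$, just makes explicit what the paper leaves implicit.
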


\begin{proof}
It follows by definition of $\overline{\Ebar}$, by the proof of Lemma \ref{lem:IndEcheckEhat} and Lemma \ref{lem:Eprimeprime}.
\end{proof}

\begin{rmk}
If $\eta$ is in the same positive cone with respect to $\iotabar(\M)$ of the vector $V\xi$, where $\iotabar = \Ad V$ is a unitary implementation of $\iotabar$ and $\xi$ is the jointly cyclic and separating vector for $\iota(\N)$ and $\M$ chosen in the previous section to define $\gamma$, then $\iotabar \circ j_{\M} = j_{\iotabar(\M)} \circ \iotabar$. In this case, by $\gamma = \iota\circ \iotabar$ and by definition of canonical endomorphism, we get $\alpha = \id_\M$ and $\overline{\Ebar} = E''$. 
\end{rmk}

Before proving our main theorem, we recall a crucial result on the Connes--Stinespring representation of $E$, not necessarily with finite index \cite[\Prop 5.1]{Lon89}, \cite[\Lem 3.3]{FiIs95}. 

\begin{prop}\label{prop:ConnesStineE}
Every $E\in\E(\M,\iota(\N))$ can be written as $E = \iota(w)^*\gamma(\slot)\iota(w)$, where $\gamma$ is a canonical endomorphism of $\M$, $w\in\N$ is an isometry such that $\iota(w):\id_{\iota(\N)}\Rightarrow\gamma_{\restriction\iota(\N)}$, and $e := \gamma^{-1}(\iota(ww^*)) \in \M_1$ is a Jones projection for $E$.
\end{prop}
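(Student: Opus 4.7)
Plan: Following \cite[\Prop 5.1]{Lon89} and \cite[\Lem 3.3]{FiIs95}, the strategy is to combine the basic construction of $E$ (which produces a Jones projection $e \in \M_1$) with the canonical endomorphism $\gamma$, extended as a normal $*$-isomorphism $\M_1 \to \iota(\N)$, and to recognize $(\gamma, \iota(w))$ as a Stinespring dilation of $E$. The isometry $w$ will be extracted as a ``square root'' of the projection $\gamma(e) \in \iota(\N)$.

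First, at the jointly cyclic and separating vector $\xi$ used in Section \ref{sec:dualexp} to define $\gamma$, one performs the basic construction of $E$: the Jones projection $e$ is the orthogonal projection of $L^2(\M, \xi)$ onto $\overline{\iota(\N)\xi}$. Standard facts give that $e$ commutes with $\iota(\N)$, satisfies the pull-down identities $e m e = E(m) e = e E(m)$ for every $m \in \M$, and generates the Jones extension $\M_1 = (\M \cup \{e\})'' = j_\M(\iota(\N)')$.

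Second, one observes that $\gamma = j_{\iota(\N)} \circ j_\M$ extends to a normal $*$-isomorphism from $\M_1$ onto $\iota(\N)$, since $j_\M(\M_1) = \iota(\N)'$ and $j_{\iota(\N)}(\iota(\N)') = \iota(\N)$. The image $p := \gamma(e) \in \iota(\N)$ is a projection whose central support equals $\oneop$, inherited from $e$ having central support $\oneop$ in $\M_1$ (a consequence of faithfulness of $E$). Proper infiniteness of $\N$ then provides an isometry $w \in \N$ with $\iota(w w^*) = p$, which is precisely $e = \gamma^{-1}(\iota(w w^*))$.

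Third, one verifies the intertwining property $\iota(w) : \id_{\iota(\N)} \Rightarrow \gamma_{\restriction \iota(\N)}$ and the representation $E(m) = \iota(w)^* \gamma(m) \iota(w)$. Applying $\gamma$ to the pull-down identity $e m e = E(m) e$ yields $\iota(w w^*) \gamma(m) \iota(w w^*) = \gamma(E(m)) \iota(w w^*)$, and sandwiching by $\iota(w)^*$ on the left and $\iota(w)$ on the right, using $\iota(w)\iota(w)^* = \gamma(e)$ and $\iota(w)^* \iota(w) = \oneop$, produces the formula for $E$. Applying $\gamma$ to $e \iota(n) = \iota(n) e$ for $n \in \N$, together with the same sandwiching, then forces $\iota(w) \iota(n) = \gamma(\iota(n)) \iota(w)$. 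Equivalently, one recognizes $(\gamma, \iota(w))$ as a Stinespring dilation of the UCP map $E: \M \to \M$, and the $\iota(\N)$-bimodularity of $E$ produces the intertwining by uniqueness of minimal Stinespring dilations.

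The main obstacle is the extraction of the isometry $w \in \N$ with $\iota(w w^*) = \gamma(e)$: it relies essentially on the proper infiniteness of $\N$ (so that every projection with full central support is Murray--von Neumann equivalent to $\oneop_\N$) and on the faithfulness of $E$ (which delivers full central support for $e$, hence for $\gamma(e)$). This is exactly why the standing assumption of this section is that $\N$ and $\M$ are properly infinite, and why the stabilization by tensoring with a type $I_\infty$ factor, as in the remark opening Section \ref{sec:dualexp}, suffices to reduce the general case to this one.
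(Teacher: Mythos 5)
The paper itself does not prove this proposition: it is recalled from \cite[\Prop 5.1]{Lon89} and \cite[\Lem 3.3]{FiIs95}, and your overall strategy (basic construction, the isomorphism $\gamma:\M_1\to\iota(\N)$, extraction of an isometry with range projection $\gamma(e)$, pull-down identity) is indeed the one of those references. As written, however, your argument has three concrete gaps. First, the Jones projection cannot be the projection onto $\overline{\iota(\N)\xi}$ for the \emph{jointly} cyclic and separating vector $\xi$: since $\xi$ is cyclic for $\iota(\N)$, that projection is $\oneop$. One must take the GNS vector $\Omega$ of a state of the form $\varphi_0\circ E$, which is cyclic and separating for $\M$ but \emph{not} cyclic for $\iota(\N)$, and the identity $eme=E(m)e$ uses the $E$-invariance of the state, not merely cyclicity and separation. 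Second, the principle you invoke to get $\gamma(e)\sim\oneop$ --- that in a properly infinite algebra every projection with full central support is equivalent to $\oneop$ --- is false (a rank-one projection in $B(\Hilb)$ is a counterexample). The step is saved by the additional observation that $e$ is itself a \emph{properly infinite} projection of $\M_1$, because $e\M_1e\supseteq e\M e=\iota(\N)e\cong\iota(\N)$ is properly infinite with unit $e$; in a $\sigma$-finite algebra two properly infinite projections with the same central support are equivalent.

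The third gap is the essential one. Applying $\gamma$ to $e\iota(n)=\iota(n)e$ only shows that $\gamma(\iota(n))$ commutes with the range projection $\iota(ww^*)$; it does not force $\iota(w)\iota(n)=\gamma(\iota(n))\iota(w)$. Indeed $w$ is determined by $\iota(ww^*)=\gamma(e)$ only up to right multiplication by a unitary of $\N$, and a generic choice destroys the intertwining relation, so the relation must be built into the construction of $w$ rather than deduced afterwards. Your fallback via uniqueness of minimal Stinespring dilations is circular: sandwiching $\gamma(e)\gamma(m)\gamma(e)=\gamma(E(m))\gamma(e)$ between $\iota(w)^*$ and $\iota(w)$ yields only $\iota(w)^*\gamma(m)\iota(w)=\iota(w)^*\gamma(E(m))\iota(w)$, so the representation formula $E=\iota(w)^*\gamma(\slot)\iota(w)$ already presupposes $\iota(w)^*\gamma(\iota(n))\iota(w)=\iota(n)$, \ie the intertwining property, before Stinespring uniqueness can be invoked. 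The actual content of \cite[\Prop 5.1]{Lon89} is precisely the production of an isometry that \emph{both} has range $\gamma(e)$ \emph{and} intertwines $\id_{\iota(\N)}$ with $\gamma_{\restriction\iota(\N)}$; this requires comparing the two standard representations of $\iota(\N)$, on $\Hilb$ and on $e\Hilb$, compatibly with the spatial implementation of $\gamma$, and is not a consequence of Murray--von Neumann equivalence of projections alone.
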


Theorem \ref{thm:EEbarandconjeqns} below is a generalization of a theorem of Longo \cite[\Thm 5.2]{Lon90} from factors to arbitrary von Neumann algebras. The proof is conceptually different from the original one, since \eg $E$ need not be a conjugate expectation of $\Ebar$ by Proposition \ref{prop:bidual} and Proposition \ref{prop:Ebarbar}.
The theorem states that every conditional expectation in $\E(\M,\iota(\N))$ with finite index arises from a solution $r$, $\rbar$ of the conjugate equations \eqref{eq:catconjeqns} for $\iota:\N\to\M$ and $\iotabar:\M\to\N$ (in the concrete 2-\Cstar-category $\vNMor$ see \eqref{eq:conjeqns} below), and vice versa. 

In particular, the index of $E$ and $\Ebar$ can be expressed using  $r$, $\rbar$ (Corollary \ref{cor:Indexsolconjeqns}). The \emph{standard solutions} (in the sense of \cite{LoRo97}, \cite{GiLo19}, when they are defined) of the conjugate equations arise in this way by choosing the minimal expectation $E=E^0$ (Corollary \ref{cor:stdsol}).

\begin{thm}\label{thm:EEbarandconjeqns}
Let $\N$, $\M$ be properly infinite von Neumann algebras and let $\iota:\N\to\M$ be a morphism with conjugate morphism $\iotabar: \M\to\N$. 
For every conditional expectation $E\in \E(\M,\iota(\N))$ with finite index and with conjugate expectation $\Ebar \in \E(\N,\iotabar(\M))$, there are intertwiners
$r : \id_\N \Rightarrow \iotabar\circ\iota$ and $\rbar : \id_\M \Rightarrow \iota\circ\iotabar$, denoted by \footnote{$\N$ is denoted by a light-shaded region, $\M$ by a dark-shaded region. Diagrams should be read from right to left for 1-morphisms and from top to bottom for 2-morphisms.}:
\begin{align}
r = \tikzmath[scale=.5]{
\fill[\colN,rounded corners] (-1,-1.25) rectangle (1.5,1.25);
\draw[fill = \colM] (-.45,-1.25) -- (-.45,-.4) arc (180:0:.7) -- (.95,-1.25);
}\;,
\quad
\rbar = \tikzmath[scale=.5]{
\fill[\colM,rounded corners] (-1,-1.25) rectangle (1.5,1.25);
\draw[fill = \colN] (-.45,-1.25) -- (-.45,-.4) arc (180:0:.7) -- (.95,-1.25);
}
\end{align}
determining $E$ and $\Ebar$ as follows:
\begin{align}\label{eq:EEbarstine}
E = \frac{1}{\iota(r^*r)} \iota(r)^* \iota\iotabar (\slot) \iota(r), \quad \Ebar = \frac{1}{\iotabar(\rbar^* \iota(r^*r)\rbar)} \iotabar(\rbar)^* \iotabar\iota (r^*r \slot) \iotabar(\rbar)
\end{align}
namely:
\begin{align}
E =
\frac{1}{\tikzmath[scale=.5]{
\fill[\colN,rounded corners] (-1,-1.25) rectangle (1.5,1.25);
\clip[rounded corners] (-1,-1.25) rectangle (1.5,1.25);
\fill[\colM] (-1,-1.25) rectangle (-.5,1.25);
\draw (-.5,-1.25) -- (-.5,1.25);
\draw[fill = \colM] (.5,0) circle (.6);
}}
\tikzmath[scale=.75]{
\fill[\colN,rounded corners] (-1,-1.25) rectangle (1.5,1.25);
\clip[rounded corners] (-1,-1.25) rectangle (1.5,1.25);
\fill[\colM] (-1,-1.25) rectangle (-.5,1.25);
\draw (-.5,-1.25) -- (-.5,1.25);
\draw[fill = \colM] (1,-.25) -- (1,-.3) arc (360:180:.5) -- (0,.3) arc (180:0:.5) -- (1,.25);
\fill[white] (1.5,-.3) -- (.7,-.3) -- (.7,.3) -- (1.5,.3);
\draw (1.5,-.3) -- (.7,-.3)--(.7,.3) -- (1.5,.3);
\draw (1.1,0) node {$\slot$};
}\;,
\quad
\Ebar =
\frac{1}{\tikzmath[scale=.5]{
\fill[\colM,rounded corners] (-1,-1.25) rectangle (1.5,1.25);
\clip[rounded corners] (-1,-1.25) rectangle (1.5,1.25);
\fill[\colN] (-1,-1.25) rectangle (-.5,1.25);
\draw (-.5,-1.25)--(-.5,1.25);
\draw[fill = \colN] (.5,0) circle (.7);
\draw[fill = \colM] (.5,0) circle (.3);
}}
\tikzmath[scale=.75]{
\fill[\colM,rounded corners] (-1,-1.25) rectangle (1.5,1.25);
\clip[rounded corners] (-1,-1.25) rectangle (1.5,1.25);
\fill[\colN] (-1,-1.25) rectangle (-.5,1.25);
\draw (-.5,-1.25)--(-.5,1.25);
\draw[fill = \colN] (1,-.25)--(1,-.3) arc (360:180:.5)--(0,.3) arc (180:0:.5)--(1,.25);
\fill[white] (1.5,-.3)--(.7,-.3)--(.7,.3)--(1.5,.3);
\draw (1.5,-.3)--(.7,-.3)--(.7,.3)--(1.5,.3);
\draw[fill = \colM] (.36,0) circle (.22);
\draw (1.1,0) node {$\slot$};
}
\end{align}
and solving the conjugate equations for $\iota$ and $\iotabar$, \cf \eqref{eq:catconjeqns}:
\begin{align}\label{eq:conjeqns}
\rbar^* \iota(r) = \oneop, \quad r^* \iotabar(\rbar) = \oneop
\end{align}
namely:
\begin{align}
\tikzmath[scale=.5]{
\fill[\colM,rounded corners] (-1,-1.25) rectangle (1.5,1.25);
\clip[rounded corners] (-1,-1.25) rectangle (1.5,1.25);
\fill[\colN] (-.35,1.25) -- (-.35,-.1) arc (180:360:.3) -- (.25,1.25);
\fill[\colN] (.25,1.25) -- (.25,0) arc (180:0:.3) -- (.85,1.25);
\fill[\colN] (.25,1.25) -- (.25,0) arc (180:0:.3) -- (.85,1.25);
\fill[\colN] (.85,1.25) rectangle (1.5,-1.25);
\draw (-.35,1.25) -- (-.35,-.1) arc (180:360:.3) -- (.25,-.1) -- (.25,0) arc (180:0:.3) -- (.85,-1.25);
}\;
= 
\tikzmath[scale=.5]{
\fill[\colM,rounded corners] (-1,-1.25) rectangle (1.5,1.25);
\clip[rounded corners] (-1,-1.25) rectangle (1.5,1.25);
\fill[\colN] (.25,1.25) rectangle (1.5,-1.25);
\draw (.25,1.25) -- (.25,-1.25);
}\;,
\quad
\tikzmath[scale=.5]{
\fill[\colN,rounded corners] (-1,-1.25) rectangle (1.5,1.25);
\clip[rounded corners] (-1,-1.25) rectangle (1.5,1.25);
\fill[\colM] (-.35,1.25) -- (-.35,-.1) arc (180:360:.3) -- (.25,1.25);
\fill[\colM] (.25,1.25) -- (.25,0) arc (180:0:.3) -- (.85,1.25);
\fill[\colM] (.25,1.25) -- (.25,0) arc (180:0:.3) -- (.85,1.25);
\fill[\colM] (.85,1.25) rectangle (1.5,-1.25);
\draw (-.35,1.25) -- (-.35,-.1) arc (180:360:.3) -- (.25,-.1) -- (.25,0) arc (180:0:.3) -- (.85,-1.25);
}\;
= 
\tikzmath[scale=.5]{
\fill[\colN,rounded corners] (-1,-1.25) rectangle (1.5,1.25);
\clip[rounded corners] (-1,-1.25) rectangle (1.5,1.25);
\fill[\colM] (.25,1.25) rectangle (1.5,-1.25);
\draw (.25,1.25) -- (.25,-1.25);
}\;.
\end{align}

Vice versa, every solution $r$, $\rbar$ of the conjugate equations \eqref{eq:conjeqns} for $\iota$ and $\iotabar$ defines by the formulas in \eqref{eq:EEbarstine} a pair of conditional expectations $E$ and $\Ebar$, with $\Ebar$ conjugate of $E$.

\bigskip
In particular, the inclusion $\iota(\N)\subset\M$ has finite index (Definition \ref{def:finindex}) if and only if there is a solution of the conjugate equations \eqref{eq:conjeqns} for $\iota$ and $\iotabar$.
\end{thm}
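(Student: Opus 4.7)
My plan is to pass between conditional expectations and intertwiners via the Connes--Stinespring representation (Proposition \ref{prop:ConnesStineE}), applied both to $E$ and to its conjugate expectation $\Ebar$, and then to verify the conjugate equations after a suitable rescaling by central elements. Applied to $E\in\E(\M,\iota(\N))$, the proposition gives an isometry $w\in\N$ with $E = \iota(w)^*\gamma(\slot)\iota(w)$ and $\iota(w):\id_{\iota(\N)}\Rightarrow\gamma_{\restriction\iota(\N)}$. Since $\gamma = \iota\circ\iotabar$ and $\iota$ is injective, this intertwining descends to $wn = \iotabar\iota(n)w$ for every $n\in\N$, so that $w:\id_\N\Rightarrow\iotabar\circ\iota$, and I set $r := w$. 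Applied instead to $\Ebar\in\E(\N,\iotabar(\M))$ (noting $\iotabar(\M)\subset\N$ is again an inclusion of properly infinite von Neumann algebras), the proposition yields an isometry $\bar w\in\M$ and a canonical endomorphism $\bar\gamma$ of $\N$ with $\iotabar(\bar w):\id_{\iotabar(\M)}\Rightarrow\bar\gamma_{\restriction\iotabar(\M)}$. Choosing the implementing vector $\eta$ so that $\overline{\iotabar} = \iota$ exactly (permitted by $\overline{\iotabar}\cong\iota$), one has $\bar\gamma = \iotabar\circ\iota$, and pulling back through $\iotabar^{-1}$ on $\iotabar(\M)$ gives $\bar w\,m = \gamma(m)\bar w$ for $m\in\M$. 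Set $\rbar := \bar w$. With both normalized to isometries, $\iota(r^*r) = 1$ and the formulas \eqref{eq:EEbarstine} reduce to the Stinespring expressions, recovering $E$ and, by bimodularity of $\iotabar$, also $\Ebar$.

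The main obstacle is verifying the conjugate equations $\rbar^*\iota(r) = 1_\M$ and $r^*\iotabar(\rbar) = 1_\N$. An intertwining computation gives a priori only that $\rbar^*\iota(r)\in\iota(\N)'\cap\M$ and $r^*\iotabar(\rbar)\in\iotabar(\M)'\cap\N$, so the content is in pinning down the normalization. The plan is to exploit the index identities $\Ind(E') = E(\Ind(E))$ (Lemma \ref{lem:Eprimeinverse}) and $\Ind(\Ebar) = \iota^{-1}(\Ind(E'))$ (Remark \ref{rmk:IndEbar}), together with the Jones-projection identity $\iota(ww^*) = \gamma(e)$ from Proposition \ref{prop:ConnesStineE}, which fix the relative scaling of $w$ and $\bar w$. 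If a direct match fails, one rescales $r\mapsto cr$ and $\rbar\mapsto\bar c\,\rbar$ by positive invertible central elements $c\in Z(\N)$, $\bar c\in Z(\M)$: since central elements in $\iota(\N)$ commute with $\gamma(\M)\subset\iota(\N)$, such rescalings leave \eqref{eq:EEbarstine} invariant while adjusting the products in the conjugate equations to the identity. The remaining delicate step is to show that $\rbar^*\iota(r)$ is in fact central in $\M$ (not merely in the relative commutant $\iota(\N)'\cap\M$), which is where the structure of the dual expectation and the modular theory underlying $\gamma$ enter essentially.

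For the converse direction, given a solution $r,\rbar$ of \eqref{eq:conjeqns}, the formulas \eqref{eq:EEbarstine} define maps that are normal (from the normality of $\iota,\iotabar$), completely positive (Stinespring form rescaled by a positive invertible central element), unital (using $\iota(r)^*\iota(r) = \iota(r^*r)$ together with the conjugate equations), bimodular over $\iota(\N)$ and $\iotabar(\M)$ respectively (from the intertwining properties of $r$ and $\rbar$), and faithful (since $\iota(r^*r)$ and $\iotabar(\rbar^*\iota(r^*r)\rbar)$ are invertible). Hence $E\in\E(\M,\iota(\N))$ and $\Ebar\in\E(\N,\iotabar(\M))$. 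Finite index of $E$ follows because $\rbar$ provides a bounded inverse for the associated operator-valued weight, with $\Ind(E)$ given explicitly as a double-loop diagram by Corollary \ref{cor:Indexsolconjeqns}. The identification of $\Ebar$ as the conjugate expectation of $E$ in the sense of the preceding section, and the equivalence between finite index of $\iota(\N)\subset\M$ and the existence of a solution of \eqref{eq:conjeqns}, then follow immediately.
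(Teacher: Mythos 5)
Your converse direction is essentially the paper's argument and is fine in outline (modulo the circular citation of Corollary \ref{cor:Indexsolconjeqns}, whose formula is actually \emph{derived} inside the paper's proof from the Pimsner--Popa basis $\rbar^*\iota(r^*r)^{1/2}$ and \cite[Thm 3.5]{BDH88}). The forward direction, however, has a genuine gap at exactly the point you flag as ``delicate''. If you extract $r$ from the Connes--Stinespring representation of $E$ and $\rbar$ from that of $\Ebar$ as two \emph{independent} applications of Proposition \ref{prop:ConnesStineE}, the element $\rbar^*\iota(r)$ is a priori only in $\iota(\N)'\cap\M$, and there is no reason for it to be central, invertible, or anywhere near $\oneop$: each Stinespring isometry is determined only up to a unitary in a relative commutant, and this ambiguity is precisely a non-central unitary in general. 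Rescaling by positive invertible elements $c\in Z(\N)$, $\bar c\in Z(\M)$ only multiplies $\rbar^*\iota(r)$ by (images of) central elements, so it cannot repair a discrepancy living in $\iota(\N)'\cap\M$; your plan to then ``show that $\rbar^*\iota(r)$ is in fact central'' is asking for something that is simply false for independent choices. Note also that your initial normalization (both $r$ and $\rbar$ isometries) is outright incompatible with the conjugate equations unless $\iota(\N)=\M$, since \eqref{eq:conjeqns} forces $\Ind(E)=\rbar^*\iota(r^*r)\rbar=\rbar^*\rbar$; so the rescaling is not an optional fallback but mandatory, and it still does not address the relative-commutant mismatch.

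The paper avoids this by never choosing $r$ independently: it takes $v:\id_\M\Rightarrow\iota\circ\iotabar$ from the Connes--Stinespring representation of the \emph{dual} expectation $\Ehat\in\E(\M_1,\M)$, normalizes $\rbar:=\vtilde:=\Ind(E)^{1/2}v$ so that $E(\vtilde\vtilde^*)=\oneop$ (this uses $E''(vv^*)=E(\Ind(E))^{-1}$, i.e.\ Lemmas \ref{lem:Eprimeinverse} and \ref{lem:Eprimeprime} together with \cite[Lem 3.1]{Kos86}), and then \emph{defines} $r:=\iota^{-1}(E(\vtilde))$. With this coupling one conjugate equation, $r^*\iotabar(\rbar)=\iota^{-1}(E(\vtilde^*\iota\iotabar(\vtilde)))=\iota^{-1}(E(\vtilde\vtilde^*))=\oneop$, is automatic; the formula $E=\iota(r)^*\iota\iotabar(\slot)\iota(r)$ follows because both sides are $\iota(\N)$-bimodular and agree on $\vtilde\vtilde^*$, which generates $\M$ over $\iota(\N)$; and the second conjugate equation comes from the single-element Pimsner--Popa expansion $m=\vtilde^*E(\vtilde m)$ evaluated at $m=\oneop$. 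If you want to salvage your two-sided approach you would have to prove that the two Stinespring isometries can be chosen compatibly, which is essentially equivalent to building in the coupling the paper imposes by hand.
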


\begin{proof}
Denote by $J_{\M_1,\xi}$ the modular conjugation of the Jones extension $\M_1$ with respect to the vector $\xi$, jointly cyclic and separating for $\iota(\N)$ and $\M$, which is also cyclic and separating for $\M_1$. Set $j_{\M_1} := \Ad J_{\M_1,\xi}$. By Proposition \ref{prop:ConnesStineE}, let $v\in\M$ be an intertwiner $v:\id_\M \Rightarrow \iota \circ \iotabar$, \ie $v m = \iota\iotabar(m) v$ for every $m\in\M$, representing $\Ehat\in\E(\M_1,\M)$ as a Connes--Stinespring dilation of a canonical endomorphism $\gamma_1: \M_1 \to \M_1$ for the inclusion $\M\subset\M_1$. Namely, $\Ehat= v^* \gamma_1(\slot) v$ on $\M_1$. The operator $v$ has the claimed intertwining properties because ${\gamma_1}_{\restriction \M} = \gamma = \iota\circ\iotabar$, as $\gamma_1 = j_\M \circ j_{\M_1} = j_{\iota(\N)} \circ j_{\M}$ on $\M_1$.
By the intertwining property, $v^*v\in Z(\M)$, but since $\Ehat(\oneop) = \oneop$, $v$ is necessarily an isometry, \ie $v^*v = \oneop$.
By the $\iota(\N)$-bimodularity of $E$ and again by the intertwining property of $v$, we have that $E(v) \iota(n) = \iota\iotabar\iota(n) E(v)$ for every $n\in\N$, thus $\iota^{-1} (E(v))\in\N$ and $\iota^{-1}(E(v)) : \id_{\N} \Rightarrow \iotabar\circ\iota$. We have to choose the correct normalization for $v$ and $\iota^{-1}(E(v))$, namely the Pimsner--Popa one, in order to produce a solution of the conjugate equations and to have the desired representation formulas for $E$ and $\Ebar$. 

First, observe that $\gamma_1$ is an isomorphism of $\M_1$ onto $\iota(\N)$ and that $vv^*\in\M$ is a Jones projection for $\Echeck = \gamma_1 \circ \Ehat \circ \gamma_1^{-1}$ by Proposition \ref{prop:ConnesStineE}.
By Lemma \ref{lem:Eprimeprime}, we have that  $(\Echeck)^{\wedge} = E''$, hence $E''(vv^*) = E(\Ind(E))^{-1}$ by Lemma \ref{lem:Eprimeinverse} and \cite[\Lem 3.1]{Kos86}, which holds for non-factor inclusions.
In terms of $E$, we have that $E(\Ind(E))^{-1} E(\Ind(E)vv^*) = E(\Ind(E))^{-1}$, thus $E(\Ind(E)vv^*) = \oneop$. Set $\vtilde := \Ind(E)^{1/2} v \in \M$, where $\Ind(E)^{1/2}$ is a positive invertible operator in $Z(\M)$, thus $E(\vtilde {\vtilde}^*) = \oneop$. Set also $w := \iota^{-1} (E({\vtilde})) \in \N$ and observe that $w:\id_\N \Rightarrow \iotabar\circ\iota$, as one can directly check using the $\iota(\N)$-bimodularity of $E$ and the fact that $\vtilde$ has the same intertwining property of $v$. With this choice of normalization we immediately have one of the conjugate equations \eqref{eq:conjeqns} solved by $w$ and $\vtilde$:
\begin{align}\label{eq:conjeq1}
w^* \iotabar(\vtilde) = \iota^{-1}(E({\vtilde}^* \iota\iotabar(\vtilde))) = \iota^{-1}(E(\vtilde{\vtilde}^*)) = \oneop.
\end{align}
We want to show that $w$ is an isometry representing $E$ as a Connes--Stinespring dilation of $\gamma$ and that the other conjugate equation in \eqref{eq:conjeqns} is solved too by $w$ and $\vtilde$. To do so, observe that for every $m\in Z(\M)$ we have $mv = vm = \iota\iotabar(m)v$, thus $mvv^* = \iota\iotabar(m)vv^*$. In particular,
$$\vtilde {\vtilde}^* = \Ind(E) vv^* = \iota\iotabar(\Ind(E))vv^*$$
thus $\vtilde {\vtilde}^*$ generates $\M$ from $\iota(\N)$ as the weakly closed span of monomials of the form $\iota(n_1) \vtilde {\vtilde}^* \iota(n_2)$, $n_1,n_2\in \N$, since the Jones projection $vv^*$ for $\Echeck$ does, \cite[\Sec 1.1.3]{PopBook}. The normal completely positive map $\iota(w)^* \iota\iotabar(\slot) \iota(w)$ defined on $\M$ is clearly $\iota(\N)$-bimodular, by the intertwining property of $w$, and by \eqref{eq:conjeq1} its value on $\vtilde {\vtilde}^*$ equals the value of $E$ on $\vtilde {\vtilde}^*$, namely $\oneop$. Thus it must coincide with $E$ and we have the desired Connes--Stinespring representation formula $E = \iota(w)^* \iota\iotabar(\slot) \iota(w)$ on $\M$. In particular, $w$ is necessarily an isometry. 
The projection $e := \gamma_1^{-1}(\iota(ww^*))\in\M_1$ is also in $\iota(\N)'$ and it implements $E$ on $\M$, \ie $e m e = E(m) e$ for every $m\in\M$, as one can show by applying $\gamma_1$ and computing
$$\iota(ww^*) \iota\iotabar(m) \iota(ww^*) = \iota(ww^* \iotabar(m) ww^*) = \iota\iotabar(\iota(w)^* \iota\iotabar(m) \iota(w)) \iota(ww^*) = \iota\iotabar(E(m)) \iota(ww^*).$$
Moreover, $e$ is a Jones projection for $E$, as $\M e \M$ contains $\oneop$, \cite[\Sec 1.1.3]{PopBook}. Indeed, $\vtilde^* e \vtilde = \oneop$ which follows from $\gamma_1(\vtilde^*e\vtilde) = \iota(\iotabar(\vtilde)^*w w^*\iotabar(\vtilde)) = \oneop$ again by \eqref{eq:conjeq1}. We have also shown that $\vtilde^*$ is a Pimsner--Popa basis consisting of one element for the inclusion $\iota(\N)\subset\M$ with respect to $E$ \cite[\Sec 1.1.4]{PopBook}, namely that $\vtilde^* e \vtilde = \oneop$ and $E(\vtilde \vtilde^*)$ is a projection, in this case $\oneop$ by our choice of normalization. 
Thus we can apply the Pimsner--Popa expansion \cite[\Sec 1.1.4]{PopBook}, \cf \cite[\Cor 5.7]{Lon90} in the subfactor case, to the elements $m\in\M$, namely $m=\vtilde^* E(\vtilde m)$. By choosing $m=\oneop$, we obtain the other conjugate equation in \eqref{eq:conjeqns} solved by $w$ and $\vtilde$:
$$\vtilde^* \iota(w) = \vtilde^* E(\vtilde) = \oneop.$$
Set $r := w$, $\rbar := \widetilde v$. In this special case where $w^*w=\oneop$ and $\vtilde^* \vtilde = \Ind(E)$, the representation formula for $\Ebar$ in \eqref{eq:EEbarstine} follows from the one of $\Ehat$, by $\Ebar = \iota^{-1} \circ \Echeck \circ \iota$ and $\Echeck = \gamma_1 \circ \Ehat \circ \gamma_1^{-1}$.

Vice versa, let $r$, $\rbar$ be a solution of \eqref{eq:conjeqns}. Observe first that $\iota(r^*r)$, $\iotabar(\rbar^*\rbar)$ and $\iotabar(\rbar^* \iota(r^*r)\rbar)$ are strictly positive invertible operators by the tensor \Cstar-categorical Pimsner--Popa bounds enforced by the conjugate equations \cite[\Lem 2.7]{LoRo97}, \cite[\Lem 8.18]{GiLo19}, and valid for non-simple tensor units. Thus $E$ and $\Ebar$ defined by \eqref{eq:EEbarstine} are well-defined normal faithful (by the Pimsner--Popa bounds) conditional expectations of $\M$ onto $\iota(\N)$ and of $\N$ onto $\iotabar(\M)$, respectively. The projection $e := \gamma_1^{-1}(\iota(r)\iota(r^*r)^{-1}\iota(r)^*)\in\M_1$ is a Jones projection for $E$. Indeed, $e\in\iota(\N)'$ and $E(m) e = e m e$ for every $m\in\M$, as one can check by applying $\gamma_1$ and using the intertwining property of $r$. Moreover, $\rbar^* \iota(r^*r)^{1/2} e \iota(r^*r)^{1/2} \rbar = \oneop$ again by the intertwining property of $r$ and by the second one of the conjugate equations \eqref{eq:conjeqns}. Similarly, $E(\iota(r^*r)^{1/2} \rbar\rbar^* \iota(r^*r)^{1/2}) = \oneop$, hence $\rbar^* \iota(r^*r)^{1/2}$ is a Pimsner--Popa basis for $\iota(\N)\subset\M$ with respect to $E$. By \cite[\Thm 3.5]{BDH88}, $E$ has finite index and its value can be computed out of the Pimsner--Popa basis (independently of its choice) as $\Ind(E) = \rbar^* \iota(r^*r) \rbar$. 
By the same theorem, \cf \cite[\Rmk 3.8]{BDH88}, the operator-valued weight from $\M_1$ onto $\M$ dual to $E$ is characterized by its $\M$-bimodularity and by taking value $\oneop$ on $e$, hence it must coincide with $F := \rbar^* \iota(r^*r) \gamma_1(\slot) \rbar$, as one can check by using the intertwining properties of $r$ and $\rbar$ and the first one of the conjugate equations \eqref{eq:conjeqns}.
We conclude that $\Ehat = (\rbar^* \iota(r^*r) \rbar)^{-1} \rbar^* \iota(r^*r) \gamma_1(\slot) \rbar$ and then $\Ebar$ defined by \eqref{eq:EEbarstine} is in fact the conjugate expectation of $E$, completing the proof.
\end{proof}

\begin{rmk}
A categorical analogue of a conditional expectation $E$ (restricted to the relative commutants in the Jones tower) is the notion of \emph{left inverse} for an object (or 1-morphism) given in \cite[\Sec 2]{LoRo97}. Left inverses are defined in the context of abstract tensor \Cstar-categories (or abstract 2-\Cstar-categories), not necessarily with simple tensor unit. In \cite[\Lem 2.5]{LoRo97}, it is shown that every left inverse has a representation formula similar to the one for $E$ in \eqref{eq:EEbarstine}, provided that $\iota$ and $\iotabar$ are conjugate in the categorical sense of the conjugate equations \eqref{eq:conjeqns}. In the previous theorem, from the analytical condition that $\Ind(E)$ is finite in $Z(\M)$, we deduce at the same time that \eqref{eq:conjeqns} admits a solution and that the formula for $E$ in \eqref{eq:EEbarstine} holds.
\end{rmk}

\begin{rmk}\label{rmk:wrongEbar}
Concerning the conjugate expectation $\Ebar$, another more \lqq symmetric" representation formula, instead of the one considered in \eqref{eq:EEbarstine}, would be $\frac{1}{\iotabar(\rbar^*\rbar)} \iotabar(\rbar)^* \iotabar\iota (\slot) \iotabar(\rbar)$. In graphical notation: 
\begin{align}
\frac{1}{\tikzmath[scale=.5]{
\fill[\colM,rounded corners] (-1,-1.25) rectangle (1.5,1.25);
\clip[rounded corners] (-1,-1.25) rectangle (1.5,1.25);
\fill[\colN] (-1,-1.25) rectangle (-.5,1.25);
\draw (-.5,-1.25) -- (-.5,1.25);
\draw[fill = \colN] (.5,0) circle (.6);
}}
\tikzmath[scale=.75]{
\fill[\colM,rounded corners] (-1,-1.25) rectangle (1.5,1.25);
\clip[rounded corners] (-1,-1.25) rectangle (1.5,1.25);
\fill[\colN] (-1,-1.25) rectangle (-.5,1.25);
\draw (-.5,-1.25) -- (-.5,1.25);
\draw[fill = \colN] (1,-.25) -- (1,-.3) arc (360:180:.5) -- (0,.3) arc (180:0:.5) -- (1,.25);
\fill[white] (1.5,-.3) -- (.7,-.3) -- (.7,.3) -- (1.5,.3);
\draw (1.5,-.3) -- (.7,-.3)--(.7,.3) -- (1.5,.3);
\draw (1.1,0) node {$\slot$};
}\;.
\end{align}
By the previous theorem, this expression does not define in general a conjugate expectation of $E$, unless, \eg, 
$r^*r$ is a scalar multiple of $\oneop$.
This is the case for instance if $\N$ is a factor.

Moreover, $\rbar^* \rbar$, namely:
\begin{align}
\tikzmath[scale=.5]{
\fill[\colM,rounded corners] (-1,-1.25) rectangle (1.5,1.25);
\draw[fill = \colN] (.25,0) circle (.7);
}
\end{align}
belongs to $Z(\M) = \vNMor(\M,\M)(\id_\M,\id_\M)$ but it is not a good candidate expression for $\Ind(E)$, as it is not independent of the choice of the pair $r$, $\rbar$ solving the conjugate equations \eqref{eq:conjeqns} and fulfilling the first equality in \eqref{eq:EEbarstine}, \ie $E = \frac{1}{\iota(r^*r)} \iota(r)^* \iota\iotabar (\slot) \iota(r)$. Take for instance $r' := \lambda r$, $\rbar' := \lambda^{-1} \rbar$ with $\lambda > 0$.
\end{rmk}

\begin{rmk}
The relation between finiteness of the index (Definition \ref{def:finindex}) and the existence of solutions of the conjugate equations beyond subfactors has been studied with different normalizations of the equations or of the solutions in \cite[\Sec 5]{FiIs95} and \cite[\Sec 7]{BDH14}, but not the relation with a given conditional expectation together with its conjugate expectation.

Moreover, the existence of Pimsner--Popa bases consisting of a single element, for inclusions of properly infinite algebras, appears already \eg in \cite[\Sec 3]{Kos86}, \cite[\Prop 3.22]{BDH88}, \cite[\Cor 5.6]{Lon90} and \cite[\Thm 1.1.6, \Rmk 1.1.7]{PopBook}, but with no additional intertwining property on the Pimsner--Popa element as it is required in the previous theorem. See \cite[\Rmk 3.8]{DVGi18} for a discussion on this point.
\end{rmk}

From Theorem \ref{thm:EEbarandconjeqns} and its proof, and from Lemma \ref{lem:Eprimeinverse}, Remark \ref{rmk:IndEbar}, the index of $E$ and $\Ebar$ can be expressed using  $r$ and $\rbar$:

\begin{cor}\label{cor:Indexsolconjeqns}
In the assumptions of the previous theorem, we have:
\begin{align} 
\Ind(E) = \rbar^* \iota(r^*r) \rbar, \quad \Ind(\Ebar) = \frac{r^*\iotabar(\rbar^* \iota(r^*r) \rbar)r}{r^*r}
\end{align}
namely:
\begin{align}
\Ind(E) = \tikzmath[scale=.5]{
\fill[\colM,rounded corners] (-1,-1.25) rectangle (1.5,1.25);
\draw[fill = \colN] (.25,0) circle (.8);
\draw[fill = \colM] (.25,0) circle (.35);
}\;,
\quad
\Ind(\Ebar) = \frac{\tikzmath[scale=.5]{
\fill[\colN,rounded corners] (-1,-1.25) rectangle (1.5,1.25);
\draw[fill = \colM] (.25,0) circle (1.1);
\draw[fill = \colN] (.25,0) circle (.7);
\draw[fill = \colM] (.25,0) circle (.3);
}}{\tikzmath[scale=.5]{
\fill[\colN,rounded corners] (-1,-1.25) rectangle (1.5,1.25);
\draw[fill = \colM] (.25,0) circle (.38);
}}
\end{align}
independently of the choice of solutions $r$, $\rbar$ of the conjugate equations \eqref{eq:conjeqns} fulfilling \eqref{eq:EEbarstine}.
\end{cor}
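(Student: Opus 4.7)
The first formula has essentially already been extracted in the proof of Theorem \ref{thm:EEbarandconjeqns}: there, starting from a solution $r,\rbar$ of the conjugate equations such that \eqref{eq:EEbarstine} holds, it is shown that $\rbar^*\iota(r^*r)^{1/2}$ constitutes a Pimsner--Popa basis of a single element for $\iota(\N)\subset\M$ with respect to $E$, and the index formula \cite[\Thm 3.5]{BDH88} yields $\Ind(E)=\rbar^*\iota(r^*r)\rbar$. So my plan is simply to quote that step. One auxiliary check is that this element actually lies in $Z(\M)$: using $r^*r\in Z(\N)$ (from $r:\id_\N\Rightarrow\iotabar\iota$) and the intertwining property $m\rbar^* = \rbar^*\iota\iotabar(m)$ for $m\in\M$, a direct manipulation moves an arbitrary $m$ past $\rbar^*\iota(r^*r)\rbar$, which confirms centrality and matches the fact that $\Ind(E)\in Z(\M)$.

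For the second formula, I would combine Remark \ref{rmk:IndEbar}, stating $\Ind(\Ebar)=\iota^{-1}(\Ind(E'))$, with Lemma \ref{lem:Eprimeinverse}, stating $\Ind(E')=E(\Ind(E))$. Substituting the first formula and the representation $E=\iota(r^*r)^{-1}\,\iota(r)^*\,\iota\iotabar(\slot)\,\iota(r)$ from \eqref{eq:EEbarstine}, I would compute
\begin{align}
E(\rbar^*\iota(r^*r)\rbar) = \iota(r^*r)^{-1}\,\iota(r)^*\,\iota\iotabar(\rbar^*\iota(r^*r)\rbar)\,\iota(r) = \iota(r^*r)^{-1}\,\iota\bigl(r^*\,\iotabar(\rbar^*\iota(r^*r)\rbar)\,r\bigr),
\end{align}
using that $\iota\iotabar=\iota\circ\iotabar$ as maps. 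Since $r^*r$ is strictly positive and invertible in $Z(\N)$ by the tensor \Cstar-categorical Pimsner--Popa bound (already invoked in the proof of Theorem \ref{thm:EEbarandconjeqns}), one has $\iota(r^*r)^{-1}\iota(x)=\iota((r^*r)^{-1}x)$, hence applying $\iota^{-1}$ gives
\begin{align}
\Ind(\Ebar) = \iota^{-1}(E(\Ind(E))) = \frac{r^*\,\iotabar(\rbar^*\iota(r^*r)\rbar)\,r}{r^*r},
\end{align}
which is the claimed expression.

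Finally, independence of the choice of $(r,\rbar)$ is automatic: by Theorem \ref{thm:EEbarandconjeqns}, any solution $(r,\rbar)$ of the conjugate equations \eqref{eq:conjeqns} fulfilling \eqref{eq:EEbarstine} determines the \emph{same} expectation $E$ (and correspondingly the same conjugate expectation $\Ebar$), and $\Ind(E)$, $\Ind(\Ebar)$ are intrinsic invariants of $E$ and $\Ebar$; hence any two admissible pairs must produce the same value of the right-hand sides. There is no real obstacle here, and the main (minor) nuisance is simply keeping track of where elements live ($r\in\N$, $\rbar\in\M$, $\iota(r^*r)\in Z(\iota(\N))$, $\rbar^*\iota(r^*r)\rbar\in Z(\M)$) so that each inversion and each application of $\iota^{-1}$ is justified.
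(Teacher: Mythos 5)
Your proposal is correct and follows essentially the same route the paper intends: the first formula is read off from the Pimsner--Popa basis $\rbar^*\iota(r^*r)^{1/2}$ and \cite[Thm.\ 3.5]{BDH88} as in the proof of Theorem \ref{thm:EEbarandconjeqns}, and the second from $\Ind(\Ebar)=\iota^{-1}(E(\Ind(E)))$ via Remark \ref{rmk:IndEbar} and Lemma \ref{lem:Eprimeinverse} together with the representation formula \eqref{eq:EEbarstine}, with independence of the choice of $(r,\rbar)$ following because the indices are intrinsic to $E$ and $\Ebar$. Your bookkeeping of where each element lives and the justification of the inversions are exactly the details the paper leaves implicit.
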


\begin{rmk}\label{rmk:uniquenessunitariesrrbarfromE}
Concerning uniqueness (up to unitary 2-isomorphisms), let $r'$, $\rbar'$ be another solution of the conjugate equations \eqref{eq:conjeqns} giving rise to the same expectation $E$ via the first expression in \eqref{eq:EEbarstine}, \ie $E = \frac{1}{\iota({r'}^*r')} \iota(r')^* \iota\iotabar (\slot) \iota(r')$. 
Assume in addition that $\iota(r^*r) = \iota({r'}^*r')$, or equivalently $r^*r = {r'}^*r'$ because $\iota$ is injective by assumption, namely that $r'$, $\rbar'$ gives rise to the same (non-normalized) left inverse for $\iota$. Then by \cite[\Lem 3.3]{LoRo97}, \cite[\Lem 8.12]{GiLo19} there is a unitary $u:\iotabar \Rightarrow \iotabar$ such that $r' = u r$, $\rbar' = \iota(u) \rbar$.
\end{rmk}

\begin{cor}\label{cor:stdsol}
If in addition $\N$ and $\M$ are factors, or finite direct sums of factors and $\iota(\N)\subset\M$ is connected, \ie $Z(\iota(\N)) \cap Z(\M) = \CC\oneop$, if $E=E^0$ is the unique minimal expectation in $\E(\M,\iota(\N))$, and if $r^*r = \sqrt{[\M:\N]_0} \oneop$ in $Z(\N)$, then $r$, $\rbar$ is a standard solution \cite{LoRo97}, \cite{GiLo19} of the conjugate equations for $\iota$ and $\iotabar$ in the 2-\Cstar-category $\vNMor$.
\end{cor}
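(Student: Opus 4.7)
The plan is to verify that the solution $(r,\rbar)$ arising from the minimal expectation $E^0$ via Theorem~\ref{thm:EEbarandconjeqns} satisfies the defining normalization of a standard solution in the sense of \cite{LoRo97,GiLo19}. Standardness is characterized (up to unitary 2-isomorphism) by the fact that the induced categorical left inverse of $\iota$ agrees, up to a central scalar, with the minimal one; equivalently, the sphericality/spherical-trace conditions must be in force.

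First, in the pure factor case, I would recall that the standard solution of the conjugate equations for $\iota$ and $\iotabar$ is uniquely determined, up to a unitary 2-isomorphism and up to an overall positive rescaling $r \mapsto \lambda r$, $\rbar \mapsto \lambda^{-1} \rbar$ (see Remark~\ref{rmk:uniquenessunitariesrrbarfromE}), by the condition $r^*r = \rbar^*\rbar = d(\iota)\,\oneop$ with $d(\iota) = \sqrt{[\M:\N]_0}$ the intrinsic dimension. Under the hypothesis $r^*r = \sqrt{[\M:\N]_0}\,\oneop$, Corollary~\ref{cor:Indexsolconjeqns} applied to $E = E^0$ yields
\begin{equation*}
[\M:\N]_0\,\oneop \;=\; \Ind(E^0) \;=\; \rbar^*\iota(r^*r)\rbar \;=\; \sqrt{[\M:\N]_0}\,\rbar^*\rbar,
\end{equation*}
forcing $\rbar^*\rbar = \sqrt{[\M:\N]_0}\,\oneop$. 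Both normalization conditions then hold simultaneously and $(r,\rbar)$ qualifies as a standard solution.

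For the connected multifactor case, I would invoke the refined characterization of standardness from \cite[\Sec 8]{GiLo19}: a solution is standard iff the non-normalized left inverse it induces on $\iota$ is proportional to the minimal left inverse, with the central proportionality constant fixed by a matching condition on the partner morphism $\iotabar$. By Theorem~\ref{thm:EEbarandconjeqns} together with the connectedness hypothesis $Z(\iota(\N)) \cap Z(\M) = \CC\oneop$ (which guarantees uniqueness of the minimal expectation $E^0$), the expectation reconstructed from $(r,\rbar)$ via \eqref{eq:EEbarstine} is $E^0$ itself, so the associated left inverse is automatically proportional to the minimal one. Connectedness collapses the relevant central elements to scalars, and the scalar normalization $r^*r = \sqrt{[\M:\N]_0}\,\oneop$, combined with the computation of $\Ind(E^0)$ via Corollary~\ref{cor:Indexsolconjeqns}, pins down $\rbar^*\rbar$ as well and yields the standardness claim.

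The main obstacle I expect is reconciling the two normalization conventions in play: the analytic one in \eqref{eq:EEbarstine}, where the conditional expectation is already unital, and the categorical one of \cite{LoRo97,GiLo19}, where left inverses need not be unital and standardness is phrased in terms of a (possibly matrix-valued) categorical trace. In the pure factor case this reduces to a transparent rescaling argument; in the multifactor connected case one must check that the scalar hypothesis $r^*r = \sqrt{[\M:\N]_0}\,\oneop$ is genuinely compatible with the sphericality demanded by \cite[\Sec 8]{GiLo19}, and this is precisely where the connectedness of $\iota(\N) \subset \M$ and the uniqueness of $E^0$ enter decisively.
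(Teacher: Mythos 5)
Your argument is correct and follows essentially the same route as the paper: Corollary \ref{cor:Indexsolconjeqns} forces $\rbar^*\rbar = \sqrt{[\M:\N]_0}\,\oneop$, hence $\|r^*r\|\,\|\rbar^*\rbar\| = [\M:\N]_0$, and standardness follows from the minimality characterization of standard solutions (\cite[\Thm 3.11]{LoRo97}, \cite[\Thm 8.44]{GiLo19}) --- which is the correct reference for the normalization criterion you recall, whereas Remark \ref{rmk:uniquenessunitariesrrbarfromE} only gives uniqueness among solutions inducing the same $E$ with the same $r^*r$ (the paper's alternative argument via \cite[\Def 8.29]{GiLo19}). The same scalar computation settles the connected multifactor case directly, since $\Ind(E^0)$ is scalar there and the minimality characterization still applies, so the vaguer appeal to a \lqq left inverse proportional to the minimal one" is an unnecessary detour.
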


\begin{proof}
Recall that $E^0$ has scalar index, $\Ind(E^0) = [\M:\N]_0\oneop$, and that the standard solutions have equal scalar loop parameters \cite[\Prop 8.30]{GiLo19}, \cite[\Thm 2.20]{BCEGP20}. The statement follows from the previous corollary by observing that
$$[\M:\N]_0 = \|\Ind(E^0)\| = \|\rbar^* \iota(r^*r) \rbar\| = \sqrt{[\M:\N]_0} \|\rbar^* \rbar\| = \|r^* r\| \|\rbar^* \rbar\|$$
and then by using the minimality characterization of standard solutions \cite[\Thm 3.11]{LoRo97}, \cite[\Thm 8.44]{GiLo19}.
Alternatively, it follows by observing that standard solutions by their very definition \cite[\Def 8.29]{GiLo19} give $E^0$ via the first formula in \eqref{eq:EEbarstine}, \cf \cite[\Thm 2.6]{GiLo19}, they fulfill $r^*r = \sqrt{[\M:\N]_0} \oneop$, thus by the uniqueness statement in Remark \ref{rmk:uniquenessunitariesrrbarfromE}.
\end{proof}

\begin{rmk}
The finite-dimensionality assumption made on $Z(\N)$ and $Z(\M)$ in the above corollary is needed to ensure uniqueness of the minimal conditional expectation $E^0$ \cite[\Thm 2.9]{Hav90}, \cite[\Prop 3.1]{Ter92} and to have a well-defined notion of standard solution of the conjugate equations in $\vNMor$. Abstractly, standard solutions are defined so far only for rigid multi-tensor \Cstar-categories, in particular for rigid tensor categories (with simple tensor unit) \cite[\Sec 3]{LoRo97}, and for rigid 2-\Cstar-categories with finitely decomposable horizontal units \cite[\Sec 8]{GiLo19}, or equivalently \cite[\Prop 8.16]{GiLo19} with finite-dimensional 2-morphisms spaces.
\end{rmk}

\section{Q-systems}\label{sec:Qsys}

In this section, as an application of Theorem \ref{thm:EEbarandconjeqns}, we prove a generalization of \cite[\Thm 5.1]{Lon94} and \cite[\Thm 3.11]{BKLR15} from finite index (irreducible) subfactors to arbitrary finite index unital inclusions of von Neumann algebras. Theorem \ref{thm:Qsys} states that every such inclusion, together with a chosen finite index conditional expectation, can be described by a Q-system. 

\begin{defi}\label{def:uFralgebraobject}
Let $\C$ be a strict tensor \Cstar-category, or \Wstar-category with (not necessarily simple nor semisimple) tensor unit $\id$. A \textbf{unitary (or \Cstar) Frobenius algebra} in $\C$ is a triple $(\theta,x,w)$, where $\theta$ is an object in $\C$ (the algebra object), $x:\theta\otimes\theta \to\theta$ (the multiplication) and $w:\id \to \theta$ (the unit) are morphisms in $\C$, depicted as:
\begin{align}
1_\theta = \tikzmath[scale=.5]{
\draw[thick] (.25,1.25) -- (.25,-1.25);
}\;\;,
\quad
x = \tikzmath[scale=.5]{
\draw[thick] (-.25,1.25) -- (-.25,.5) arc (180:360:.5) -- (.75,1.25);
\draw[thick] (.25,0) -- (.25,-1.25);
}\;,
\quad
w = \!\! \tikzmath[scale=.5]{
\draw[thick, white] (-.25,1.25) -- (-.25,.5) arc (180:360:.5) -- (.75,1.25);
\draw[thick] (.25,0) node {$\bullet$} -- (.25,-1.25);
},
\end{align}
where $1_\theta : \theta \to \theta$ is the identity morphism associated with $\theta$, satisfying the Frobenius algebra (and coalgebra) relations:
\begin{itemize}
\item[$(i)$] $x \, (x \otimes 1_\theta) = x \, (1_\theta \otimes x)$ (associativity).
\item[$(ii)$] $x \, (w \otimes 1_\theta) = 1_\theta = x \, (1_\theta \otimes w)$ (unitality).
\item[$(iii)$] $(x \otimes 1_x) \, (1_\theta \otimes x^*) = x^* \, x = (1_\theta \otimes x) \, (x^* \otimes 1_\theta)$ (Frobenius property).
\end{itemize}
In graphical notation, $(i)$, $(ii)$ and $(iii)$ read respectively:
\begin{align}
\tikzmath[scale=.75]{
\draw[thick] (-.3,1.25) -- (-.3,.7) arc (180:360:.4) -- (.5,1.25);
\draw[thick] (.1,.3) -- (.1,.1);
\draw[thick] (.1,.1)  arc (180:360:.4) -- (.9,1.25);
\draw[thick] (.5,-.3) -- (.5,-1.25);
}\;\; = \;
\tikzmath[scale=.75]{
\begin{scope}[xscale=-1]
\draw[thick] (-.3,1.25) -- (-.3,.7) arc (180:360:.4) -- (.5,1.25);
\draw[thick] (.1,.3) -- (.1,.1);
\draw[thick] (.1,.1)  arc (180:360:.4) -- (.9,1.25);
\draw[thick] (.5,-.3) -- (.5,-1.25);
\end{scope}
}\;,
\quad\,
\tikzmath[scale=.75]{
\draw[thick] (-.25,.4) node {$\bullet$} arc (180:360:.4) -- (.55,1.25);
\draw[thick] (.15,0) -- (.15,-1.25);
}\; =\, 
\tikzmath[scale=.75]{
\draw[thick] (.25,1.25) -- (.25,-1.25);
}\;\, =\,
\tikzmath[scale=.75]{
\draw[thick] (-.25,1.25) -- (-.25,.4) arc (180:360:.4) node {$\bullet$};
\draw[thick] (.15,0) -- (.15,-1.25);
}\;,\,
\quad\,
\tikzmath[scale=.75]{
\draw[thick] (-.35,1.25) -- (-.35,-.1) arc (180:360:.4) -- (.45,0);
\draw[thick] (.05,-.5) -- (.05,-1.25);
\draw[thick] (.45,0) -- (.45,0.1) arc (180:0:.4) -- (1.25,-1.25);
\draw[thick] (.85,.5) -- (.85,1.25);
}\;\; =\;
\tikzmath[scale=.75]{
\draw[thick] (-.25,1.25) -- (-.25,.7) arc (180:360:.4) -- (.55,1.25);
\draw[thick] (.15,.3) -- (.15,-.3);
\draw[thick] (-.25,-1.25) -- (-.25,-.7) arc (180:0:.4) -- (.55,-1.25);
}\; = \;
\tikzmath[scale=.75]{
\begin{scope}[xscale=-1]
\draw[thick] (-.35,1.25) -- (-.35,-.1) arc (180:360:.4) -- (.45,0);
\draw[thick] (.05,-.5) -- (.05,-1.25);
\draw[thick] (.45,0) -- (.45,0.1) arc (180:0:.4) -- (1.25,-1.25);
\draw[thick] (.85,.5) -- (.85,1.25);
\end{scope}
}\;.
\end{align}
\end{defi}

By unitality and Frobenius property it is easy to see that $r = \rbar := x^*w$ provides a solution of the conjugate equations \eqref{eq:catconjeqns} for $\theta$ and $\bar\theta := \theta$. Thus $\theta$ is self-conjugate in $\C$ and \emph{real} in the terminology of \cite{LoRo97}.

\begin{defi}\label{def:Qsys}
If $\C$ is a subcategory of the strict tensor \Cstar-category $\End(\N)$ for a von Neumann algebra $\N$, in symbols $\C \subset \End(\N)$, and if $w^*w : \id \to \id$ namely:
\begin{align}
w^*w = \! \tikzmath[scale=.75]{
\draw[thick] (.25,.35) node {$\bullet$} -- (.25,-.35) node {$\bullet$};
}
\end{align} 
is positive invertible \footnote{This condition is automatic if $\N$ is a factor, \cf \cite[\p 147]{LoRo97}.} in $Z(\N)$, we call $(\theta,x,w)$ a \textbf{Q-system}. 
\end{defi}

In $\End(\N)$, the unitary Frobenius algebra relations read $x^2 = x \theta(x)$, $xw = \oneop = x \theta(w)$, $x\theta(x)^* = x^*x = \theta(x)x^*$. The intertwining properties read $x \theta^2(n) = \theta(n) x$, $w n = \theta(n) w$ for every $n\in\N$.

\begin{rmk}
Q-systems have been introduced in \cite{Lon94} to study finite index subfactors and to characterize the canonical endomorphism.
They have been considered first in the case of simple tensor unit $\id$, \ie for $\N$ factor, and one-dimensional space of morphisms $\id\to \theta$, \ie for irreducible subfactors $\iota(\N)\subset\M$. See \cite[\Ch 3]{BKLR15} for the study of non-irreducible extensions of $\N$, again assuming simple tensor unit, but with $\M$ not necessarily factor. In the irreducible case, Q-systems can be equivalently described by means of \emph{compact} \Wstar-algebra objects in the terminology of \cite{JoPe17}, \cite[\Sec 2.4]{JoPe20}. Q-systems are a fundamental tool \eg in the study, classification and constructions of finite index extensions of Quantum Field Theories in the algebraic setting \cite{LoRe95}, \cite{LoRe04}, \cite{KaLo04}, \cite{BKL15}, \cite{BKLR16}. For generalizations to infinite index subfactors and inclusions, which are relevant also in QFT, we refer to \cite{FiIs99}, \cite{DVGi18}, \cite{JoPe19}, \cite{BDG21}, \cite{BDG21-online}, \cite{BDG22}.
\end{rmk}

\begin{rmk}\label{rmk:invertcond}
The invertibility condition on $w^*w$ in Definition \ref{def:Qsys} has been considered also in \cite[\Def 3.3]{CHPJP21}, where it is called \emph{non-degeneracy} of the Q-system, and it has been included in the conditions of \cite[\Thm 3.11]{Mue03-I}, in the context of abstract and not necessarily unitary Frobenius algebras.
 Differently from \cite{CHPJP21} and \cite{BKLR15}, in our definition of Q-system we consider only unitary Frobenius algebras realized \eg in $\End(\N)$, and not necessarily \emph{separable} in the terminology of \cite[\Def 3.1]{CHPJP21}, \cf \cite[\Def 3.13]{Mue03-I}, \ie \emph{special} in the terminology of \cite[\Def 2.14]{GiYu20}, \cf \cite[\Def 3.2]{BKLR15}. 
\end{rmk}

Let $\iota:\N\to\M$ be a morphism between properly infinite von Neumann algebras and let $\gamma : \M \to \M$ be a canonical endomorphism of $\M$ for $\iota(\N) \subset \M$ as in Section \ref{sec:condexp}. We call $\widetilde\gamma := \iota^{-1} \circ \gamma \circ \iota : \N \to \N$ a dual canonical endomorphism of $\N$ for $\iota(\N) \subset \M$. By definition,
\begin{align}
\iota \circ \widetilde\gamma = \gamma \circ \iota.
\end{align}
Morally, $\widetilde\gamma$ is the restriction of $\gamma$ to $\iota(\N)$, and the latter is the canonical endomorphism for the one step down inclusion $\gamma(\M) \subset \iota(\N)$. Furthermore, $\gamma = \iota \circ \iotabar$ as in \eqref{eq:gammaisiotaiotabar} where $\iotabar: \M \to \N$ is a conjugate morphism of $\iota:\N\to\M$, implies that
\begin{align}\label{eq:thetaisiotabariota}
\widetilde\gamma = \iotabar \circ \iota.
\end{align}
In view of the following theorem, we shall also write $\theta$ in place of $\widetilde\gamma$.

\begin{thm}\label{thm:Qsys}
Every inclusion of properly infinite von Neumann algebras $\iota(\N)\subset \M$, with arbitrary centers, together with a given finite index conditional expectation $E\in \E(\M,\iota(\N))$, can be described by a Q-system $(\theta,x,w)$ in $\End(\N)$, where $\theta$ is a dual canonical endomorphism for the inclusion and $w$ represents the expectation as $\iota^{-1}\circ E = (w^*w)^{-1}w^* \iotabar(\slot) w$. 

Vice versa, every Q-system in $\End(\N)$ arises in this way. 
\end{thm}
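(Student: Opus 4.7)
I would handle the two directions separately, with the forward implication being a direct translation of Theorem \ref{thm:EEbarandconjeqns} into the language of Frobenius algebras, and the reverse implication requiring a Longo-type construction of the extension.

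\textbf{Forward direction.} Start with a finite-index pair $(\iota(\N)\subset\M,E)$. Apply Theorem \ref{thm:EEbarandconjeqns} to obtain a conjugate morphism $\iotabar:\M\to\N$ and intertwiners $r:\id_\N\Rightarrow\iotabar\iota$, $\rbar:\id_\M\Rightarrow\iota\iotabar$ solving the conjugate equations \eqref{eq:conjeqns} and reproducing $E$ via \eqref{eq:EEbarstine}. Set $\theta:=\iotabar\circ\iota$ (a dual canonical endomorphism by \eqref{eq:thetaisiotabariota}), $w:=r$, and $x:=\iotabar(\rbar^*)=\iotabar(\rbar)^*$. The intertwining properties $wn=\theta(n)w$ and $x\theta^2(n)=\theta(n)x$ are immediate from those of $r$ and $\rbar$ after applying the $\ast$-homomorphism $\iotabar$. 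The four Q-system axioms then reduce to short computations: unitality $xw=\oneop=x\,\theta(w)$ is exactly the pair of conjugate equations \eqref{eq:conjeqns} (the second after applying $\iotabar$, using $\iota(w)=\iota(r)$); associativity $x^2=x\,\theta(x)$ collapses to $\iotabar(\rbar^*\rbar^*)$ on both sides via the intertwining identity $\rbar^*\iota\iotabar(m)=m\rbar^*$ at $m=\rbar^*$; and the Frobenius identity $x\,\theta(x^*)=x^*x=\theta(x)x^*$ reduces analogously to $\iotabar(\rbar\rbar^*)$ on all three sides. Finally, $w^*w=r^*r$ lies in $Z(\N)$ since $r$ intertwines $\id_\N$ with $\theta$, and it is positive invertible by the Pimsner--Popa bound enforced by \eqref{eq:conjeqns} (see the proof of Theorem \ref{thm:EEbarandconjeqns} and \cite[\Lem 8.18]{GiLo19}). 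The representation formula for $E$ is obtained by composing \eqref{eq:EEbarstine} with $\iota^{-1}$.

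\textbf{Reverse direction.} Given a Q-system $(\theta,x,w)$ in $\End(\N)$ with $\N$ properly infinite, I construct $(\iota(\N)\subset\M,E)$ as follows. Since $\N$ is properly infinite and acts standardly on $\Hilb:=L^2(\N)$, there exists an isometry $T\in B(\Hilb)$ with $Tn=\theta(n)T$ for all $n\in\N$ (a Stinespring-type dilation of $\theta$). Define $V:=T(w^*w)^{-1/2}\theta(w)^*\in B(\Hilb)$ or a similar centrally-normalized operator, and set $\M:=(\N\cup\{V\})''$. The associativity and Frobenius relations of the Q-system guarantee that the linear span $\N+\N V$ is closed under multiplication and adjoints, so $\M$ coincides with this closure. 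One then defines $\iota:\N\to\M$ as the inclusion, $\iotabar:\M\to\N$ by $\iotabar(\slot):=T^*(\slot)T$ (a normal $\ast$-homomorphism whose range is $\N$ and which satisfies $\iotabar\circ\iota=\theta$), and $E\in\E(\M,\iota(\N))$ by the stated formula $\iota^{-1}\circ E:=(w^*w)^{-1}w^*\iotabar(\slot)w$. Unitality of $E$ on $\iota(\N)$ follows from $xw=\oneop$ plus invertibility of $w^*w$; $\iota(\N)$-bimodularity follows from the intertwining property of $w$; faithfulness comes from $\iotabar$ being faithful and $w$ being (after normalization) a central isometry. To see that $E$ has finite index, set $\rbar:=VW^{-1}V^*\cdot\text{(normalization)}$ more intrinsically obtained by inverting the forward construction, so that $(r,\rbar)$ solves \eqref{eq:conjeqns}; the existence of such a $\rbar$ then triggers the reverse half of Theorem \ref{thm:EEbarandconjeqns}, yielding finite index and recovering $E$ as in \eqref{eq:EEbarstine}.

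\textbf{Mutual inverse.} Composing the two constructions produces the same Q-system up to the unitary 2-isomorphism of Remark \ref{rmk:uniquenessunitariesrrbarfromE}: in one direction, the Q-system built from $(r,\rbar)$ depends only on the normalization $r^*r$ (which is preserved), while in the other, the pair $(r,\rbar)$ reconstructed from $(\theta,x,w)$ returns the same $w=r$ and $x=\iotabar(\rbar^*)$ by construction.

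\textbf{Main obstacle.} The hard part is the reverse direction in the non-factor setting. When $\N$ is a factor, $w^*w$ is automatically a positive scalar and one may normalize $w$ to an isometry, recovering the original setup of \cite[\Thm 5.1]{Lon94}. Outside the factor case, one must centrally normalize $V$ using the invertibility of $w^*w\in Z(\N)$ (which is precisely the extra condition built into Definition \ref{def:Qsys}) and take care that $\M$ is still properly infinite, that $\iotabar$ remains a genuine $\ast$-homomorphism (not only on a compression), and that $E$ ends up unital on the nose. The Frobenius relation is what ultimately forces the algebra generated by $\N$ and $V$ to close up as a von Neumann algebra admitting the desired conditional expectation; without the full Frobenius law (only unitality and associativity), $\M$ would be a one-sided module, not a two-sided extension.
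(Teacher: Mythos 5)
Your forward direction is correct and coincides with the paper's: the paper likewise sets $\theta=\iotabar\circ\iota$, $w=r$, $x=\iotabar(\rbar)^*$, verifies the Frobenius and intertwining relations by exactly the computations you indicate (stated there via graphical calculus), and gets invertibility of $w^*w=r^*r$ from the Pimsner--Popa bound plus injectivity of $\iota$.

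The reverse direction, however, has a genuine gap. You build $\M$ as $(\N\cup\{V\})''$ where $V$ is manufactured from an isometry $T\in B(\Hilb)$ satisfying $Tn=\theta(n)T$. Such a $T$ exists (proper infiniteness makes $\theta$ unitarily implementable on $L^2(\N)$), but it is determined only up to left multiplication by a unitary in $\theta(\N)'\cap B(\Hilb)$, a large algebra containing $\N'$, and nothing in the Q-system data pins it down. For a generic choice of $T$ one has $T^*\N T\not\subset\N$, hence $V^*nV\notin\N$, and the span $\N+\N V$ is \emph{not} closed under multiplication; the claim that associativity and the Frobenius law force $\N+\N V$ to close up cannot be right as stated, because those are identities among $\theta$, $x$, $w$ \emph{inside} $\N$ and place no constraint on an externally chosen operator $T\in B(\Hilb)$. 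Likewise $\iotabar:=T^*(\slot)T$ need not map $\M$ onto $\N$ nor extend $\theta$ correctly beyond $\theta(\N)$. Repairing this would require constructing $\M$ abstractly from the relations $vm=\gamma(m)v$, $v^2=\iota(x)v$, etc., and then representing it --- which is precisely what the paper avoids by working \emph{downward} instead: it forms $\N_1:=\langle\theta(\N),x\rangle\subset\N$, shows $F:=\theta(w^*w)^{-1}\theta(w^*\slot w)$ is a finite-index expectation onto $\theta(\N)$ with Jones projection $e=\theta(w^*w)^{-1}ww^*$ and Pimsner--Popa basis $x\theta(w^*w)^{1/2}$, checks that $\N$ is the Jones extension of $\N_1$ by $\theta(\N)$, invokes the recognition theorem for the canonical endomorphism (\cite[\Thm 4.1]{Lon90}, \cite[\Prop 2.4]{FiIs99}) to conclude that $\theta$ is canonical for $\N_1\subset\N$, and only then obtains $\M:=\Ad J_{\N,\xi}(\N_1')$ and $E:=\gamma^{-1}\circ F\circ\gamma$ canonically via modular conjugations. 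That detour through the downward inclusion and the recognition theorem is the missing idea in your proposal.
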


\begin{proof}
Given $\iota(\N)\subset \M$, set $\theta := \iotabar \circ \iota$ to be a dual canonical endomorphism as in \eqref{eq:thetaisiotabariota}. By Theorem \ref{thm:EEbarandconjeqns}, we can choose a solution $r$, $\rbar$ of the conjugate equations \eqref{eq:conjeqns} for $\iota$ and $\iotabar$ with respect to $E$, \ie fulfilling \eqref{eq:EEbarstine}. Then $w:= r$ and $x:= \iotabar(\rbar)^* = \iotabar \otimes \rbar^* \otimes \iota$ fulfill the desired intertwining and Frobenius algebra relations in $\End(\N)$ as one can check using graphical calculus, \cf \cite[\eq (3.2.1)]{BKLR15}. The operator $w^*w = r^*r$ is invertible because $\iota(r^*r)$ is invertible by \cite[\Lem 2.7]{LoRo97}, \cite[\Lem 8.18]{GiLo19}, and $\iota$ is injective by assumption. 

Vice versa, if $(\theta,x,w)$ is a Q-system in $\End(\N)$, consider the inclusion $\theta(\N) \subset \langle\theta(\N),x\rangle$ (the von Neumann algebra generated by $\theta(\N)$ and $x$) and the map on $\langle\theta(\N),x\rangle$ defined by
$$F := \theta(w^*w)^{-1}\theta(w^*\slot w).$$
$F$ is clearly unital, completely positive, $\theta(\N)$-bimodular and normal. By the intertwining property of $x$, and by $x^2 = x \theta(x)$, $x^* = x^*x\theta(w) = x\theta(x)^*\theta(w) = x\theta(x^*w)$, we have that $\langle\theta(\N),x\rangle = x \theta(\N) = \theta(\N) x^*$ and $F(x^*x) = \theta(w^*w)^{-1}$ by $xw=\oneop$. In particular, $F$ is faithful and $F\in\E(\langle\theta(\N),x\rangle,\theta(\N))$. By direct computation one can check that $e:= \theta(w^*w)^{-1}ww^*\in\theta(\N)'\cap\N$ is a Jones projection for $F$ and that $y^*:=x\theta(w^*w)^{1/2}$ is a Pimsner--Popa basis with respect to $F$. Indeed $e$ implements $F$ on $\langle\theta(\N),x\rangle$, and $y^*ey = \oneop$, $F(yy^*) = \oneop$ hold. Thus the index of $F$ is finite and equal to $y^*y = x\theta(w^*w)x^* \in Z(\langle\theta(\N),x\rangle)$.
Moreover, $\N$ is the Jones extension of $\langle\theta(\N),x\rangle$ by $\theta(\N)$ (with respect to $F$), \ie $\N=\langle \langle\theta(\N),x\rangle, e \rangle$, for which it suffices to observe that $n = x w n = x \theta(n) w$ if $n\in\N$. Then the recognition result for the canonical endomorphism applies \cite[\Thm 4.1]{Lon90}, \cite[\Prop 2.4]{FiIs99}, and $\theta$ is a canonical endomorphism for $\N_1 := \langle\theta(\N),x\rangle \subset \N$, namely it is of the form $\theta = \Ad(J_{\N_1,\xi}J_{\N,\xi})$ on $\N$, where $J_{\N_1,\xi}$, $J_{\N,\xi}$ are the modular conjugations with respect to a jointly cyclic and separating vector $\xi$ for $\N_1$, $\N$. Let $\M:= \Ad(J_{\N_1,\xi}J_{\N,\xi})^{-1}(\N_1) = \Ad{J_{\N,\xi}}(\N_1')$ and $\iota(\N)\subset\M$ with $\iota(n) := n$. Then $\theta$ is the restriction to $\iota(\N)$ of a canonical endomorphism $\gamma := \Ad(J_{\N_1,\xi}J_{\N,\xi}) = \Ad(J_{\N,\xi}J_{\M,\xi})$ for $\iota(\N)\subset\M$ and $E := \gamma^{-1} \circ F \circ \gamma = (w^*w)^{-1}w^*\gamma(\slot)w$ is the corresponding expectation in $\E(\M,\iota(\N))$.
\end{proof}

\begin{rmk}
A \Cstar-algebraic analogue of Theorem \ref{thm:Qsys} for faithful conditional expectations with finite Watatani index \cite[\Def 1.2.2]{Wat90} (\cf the notion of strongly finite index inclusion \cite[\Def 3.6]{BDH88} and finite Pimsner--Popa bases) 
and dualizable \Cstar-correspondences \cite[\Sec 4]{KPW03} appears in \cite[\Ex 3.10, \Ex 3.12]{CHPJP21} and \cite[\Prop 4.16, \Cor 4.17]{CHPJP21}. A von Neumann-algebraic analogue for finite direct sums of $\II_1$ factors and dualizable Connes' bimodules \cite[\Sec 4]{BDH14} appears in \cite[\Sec 3]{GiYu20} and \cite[\Lem 4.1]{GiYu20}.

A categorical analogue of the second statement of Theorem \ref{thm:Qsys} concerning the splitting of the Frobenius algebra object $\theta$ as $\iotabar\circ\iota$ is due to M\"uger \cite[\Thm 3.11]{Mue03-I} in the context of abstract tensor categories, with no reference to conditional expectations. There, simplicity of the tensor unit $\id$, namely $\C(\id,\id)=\End(\id)\cong\CC$, is not necessarily assumed. It is assumed instead a weaker condition called $\End(\id)$-linearity \cite[\Def-\Prop 3.10]{Mue03-I} \footnote{Cf.\ the even weaker \lqq centrally balanced" condition of \cite[\Def 2.13]{Zit07}.}. 
If $\C\subset\End(\N)$, this condition implies that $1_\rho \otimes \lambda = \lambda \otimes 1_\rho$, or equivalently $\rho(\lambda) = \lambda$, for every $\lambda\in \End(\id) = Z(\N)$ and $\rho\in\C\subset\End(\N)$. Namely, the endomorphisms in $\C$ act trivially on the center of $\N$. 
Graphically this means that floating boxes (boxes without strands) can be commuted through every other strand.
\end{rmk}

\bigskip
\noindent
{\bf Acknowledgements.}
I am indebted to Dietmar Bisch for fruitful conversations, suggestions and questions on the topics of this paper. I also thank Roberto Longo for mentioning to me this problem and for helpful conversations.

\small

\addcontentsline{toc}{section}{References}

\newcommand{\etalchar}[1]{$^{#1}$}
\def\cprime{$'$}

\end{document}